\DeclareMathAlphabet{\mathrsfs}{U}{rsfs}{m}{n} 
\theoremstyle{definition}
\newtheorem{theorem}{Theorem}[section]
\newtheorem{lemma}{Lemma}[section]
\newtheorem{corollary}{Corollary}[section]
\newtheorem{proposition}{Proposition}[section]
\newtheorem{definition}{Definition}[section]
\newtheorem{remark}{Remark}[section]
\def\acknowledgement{\par\addvspace{17pt}\small\rmfamily
	\trivlist\if!\ackname!\item[]\else
	\item[\hskip\labelsep
	{\bfseries\ackname}]\fi}
\def\ackname{Acknowledgements}
\begin{document}
\title{{\LARGE\bf{Improved conditional gradient method for the generalized cone order optimization problem on the local sphere}}}
\author{Li-wen Zhou$^{a,b}$\footnote{E-mail address: zhouliwen@live.cn} , Ya-ling Yi $^{a}$, Min Tang $^{a}$
	and Yao-Jia Zhang$^{a,b}$\\
	$^a${\small\it School of Sciences, Southwest Petroleum University, Chengdu, 610500, P.R. China}\\
	$^b${\small\it Institute for Artificial Intelligence, Southwest Petroleum University, Chengdu 610500, Sichuan, P. R. China}\\
	}
\date{ }
\maketitle
\vspace*{-9mm}
\begin{center}
\begin{minipage}{5.6in}
\noindent{\bf Abstract.} 
In this paper, a generalized optimization problem on the local sphere is established by the cone order relation on the tangent space, and solved by an improved conditional gradient method (for short, ICGM). The auxiliary subproblems are constructed by the directed distance function on the tangent space, the iteration step size is updated by the Armijo rule, and the convergence of the ICGM is proved without the convexity of the objective function. Under the assumption of convexity, the clusters of the sequence generated by the ICGM are proved to be the spherical weakly Pareto solutions (also known as weakly efficient solutions) of this problem .
\\ \ \\
{\bf Keywords:} Generalized cone order vector optimization problem, stationary point, spherical weakly efficient solution, conditional gradient method
\\ \ \\
{\bf 2020 AMS Subject Classifications}: 49J40; 47J20.
\end{minipage}
\end{center}

\section{Introduction} \noindent

In recent years, the study of optimization problems involves many different types of problems, single-objective, multi-objective vector optimization problems, convex optimization problems, discrete optimization problems and so on. Among them, classical optimization problems study the function from $R^{n}$ to $R$, there are many classical algorithms, such as Newton's method, gradient descent method, conjugate gradient method, etc. Multi-objective Optimization Problem (MOP) studies the function from $R^{n}$ to $R^{m}$, refers to the problem that there are multiple objective functions that need to be optimized at the same time in the optimization process. In multi-objective optimization, multiple objective functions must be minimized simultaneously. Usually, there is no single point that minimizes all the given objective functions simultaneously, so according to the literature \cite{1}, the concept of optimality can be replaced by the concept of Pareto optimality or efficient.

In optimization problems, cone order relations and variational inequalities provide flexible and rich mathematical tools for algorithms, enabling optimization models to describe real problems more accurately and help find optimal solutions. They play an important role in the fields of convex optimization, nonlinear optimization, integer programming, and contribute to the development of optimization theory and methods.

Multi-objective optimization problems can be applied in engineering \cite{2}, statistics \cite{3}, management science \cite{4,5}, finance \cite{6}, environmental analysis \cite{7}, etc. There are roughly two types of algorithms for solving MOPs, one is the purified algorithm for direct computational solution represented by the genetic algorithm, and the other is the classical algorithm combining the scalarization method of MOPs with the traditional single objective optimization. In recent years, many iterative methods for solving scalarized optimization problems have been extended to the multi-objective optimization case, such as, the most rapid descent method \cite{9}, Newton's method \cite{1,10}, Quasi-Newton method \cite{11,12}, trust region method \cite{13}, and the sub-gradient method \cite{14}. It is worth mentioning that the vast majority of the above algorithms deal with vector optimization problems with non-negative orthogonal cone $R_{+}^{m}$ induced partial order relations, however, unlike them, the partial order relations in \cite{15,16,17} use not nonnegative orthogonal cone $R_{+}^{m}$ but rather closed convex cone. Such cones have recently been analyzed in real-world problems, e.g., portfolio selection problems in securities markets \cite{18,19}, vector approximation problems, and $n$-cooperator differential games \cite{20}. Therefore, it is necessary to focus attention on vector optimization problems.

The literature \cite{9} discussed the steepest descent method for weakly effective solutions of multi-objective optimization problems and the cone $C$ is a non-negative orthogonal cone of $R^{n}$; literature \cite{21} introduces a partial order method for the general cone in $R^{n}$; literature \cite{16} gives an extension of the projected gradient method to the case of constraint vector optimization, where the order is given by the general cone in $R^{n}$; literature \cite{22} provides a geometric interpretation of the weighting method for constraint vector optimization. Bonnel et al \cite{23} constructed a proximal endpoint algorithm to study the convex vector optimization problem, and finding the weakly efficient solutions from finite dimensional Hilbert space $X$ to the Banach space $Y$ mapping; Ceng and Yao \cite{24} investigated the proximal endpoint method and discussed an extension of the proximal endpoint algorithm based on the Bregman function for solving weakly effective solutions to vector optimization problems; furthermore, the Newton-like method investigated by Chuong \cite{25} in 2013 to find effective solutions for vector optimization problems of mappings from a finite-dimensional Hilbert space $X$ to the Banach space $Y$, the mappings are associated with the partial order caused by nonempty closed convex pointed cones $C$ and the auxiliary subproblem of the algorithm uses the well-known directed distance function; similarly, the conditional gradient method proposed by Chen and Yang \cite{26} for solving constrained vector optimization problems with respect to a partial order induced by a closed, convex and pointed cone with nonempty interior and the construction of the auxiliary subproblem is also based on the well-known oriented distance function. Therefore, the directed distance function is worth employing in solving vector optimization problems. In addition to this, Bonnel et al \cite{27}, Chuong \cite{28} and Botand \cite{29} have proposed some extensions for solving vector optimization problems in infinite dimensional settings.

Inspired by the above study, we consider generalizing the optimization problem to the local sphere. Since local spheres are manifolds and do not have proper ordinal relations, some definitions and theorems in linear space cannot be used directly, so we need to map the problem indirectly into linear space in order to utilize the concepts and techniques of Euclidean space to solve it. For example, ordinal relations are established through cones in vector optimization problems (Literature \cite{9,16,21,22,23,24,25,26,27,28,29}). Thus, we can define a new ordinal relation by cones on the tangent space of a local sphere.

The generalized cone-order optimization problem on a local sphere is a very interesting and worthwhile problem. Since the local sphere is smooth but it has bad convexity conditions and linear structure, using differential properties is a good choice for solving the problem, so we can improve and extend the multi-objective conditional gradient method in \cite{26} to solve the local sphere optimization problem.

The outline of this paper is as follows. In section 2, we introduce some important concepts, theorems, and propositions. In section 3, we introduce the generalized cone order optimization problem on the local sphere $M$, give its optimality condition and define its descent direction. In section 4, based on section 3, we establish our algorithm and proves its convergence.

\section{Preliminaries}\noindent
\setcounter{equation}{0}

Let $\langle , \rangle$ be the Euclidean inner product, and the corresponding paradigm is denoted by $\Vert . \Vert$. In this paper, we study unit ball, so the local sphere $M$ and its tangent space at the point $p$ are denoted respectively:
$$M\subset S^2=\{p=(p_1,p_2,p_3)\in R^3\colon\|p\|=1\}$$
and
$$T_pM=\{v\in R^2\colon\ \langle p,v\rangle=0, p\in M\}.$$

In addition to this. The tangent bundle of $M$ is denoted as:
$$TM=\bigcup_{x\in M}T_xM,$$
the vector field $V$ on $M$ is the mapping from $M$ to $TM$, each point $x \in M$ is associated with the vector $V(x) \in T_x M$ (see \cite{30}, p.22).

\begin{definition}(\cite{31}, p.2)
	The intrinsic distance on the two-dimensional localized sphere between two arbitrary points $p,q \in M$ is deﬁned by
	\begin{equation}\label{eq2.1}
	d(p,q){:}=arccos\langle p,q\rangle,
	\end{equation}
	it can be shown that the intrinsic distance $d(p,q)$ between two arbitrary points $p,q \in M$ is obtained by minimizing the arc length functional,
	$$l(\gamma){:}=\int_a^b\|\gamma^{\prime}(t)\|dt,$$
	over the set of all piecewise continuously differentiable curves $\gamma : [a, b]\rightarrow M$ joining $p$ to $q$, i.e., $x=\gamma(a)$ and $y=\gamma(b)$. Moreover, $d$ is a distance in $M$ and $(M,d)$ is a complete metric space.
\end{definition}

\begin{definition}
	A curve with constant curvature on a sphere is a geodesic.
\end{definition}

If the geodesic segment $\gamma : [a, b]\rightarrow M$ has an arc length equal to the intrinsic distance between its endpoints, i.e., if $l(\gamma)=\arccos\langle\gamma(a),\gamma(b)\rangle $ then it is said to be a minimum geodesic.

A vector field $V$ is said to be along a smooth curve $\gamma\in M$ parallel if and only if $\nabla_{\gamma^{\prime}}V=0$, where $\nabla$ is the Levi-Civita connection associated with $(M, \langle\cdot ,\cdot \rangle )$.

\begin{definition} \label{de2.3}
	We use $\nabla$ to introduce an isometry $P_{\gamma,\cdot,\cdot}$ on the tangent bundle $TM$ along $\gamma$, which is called the parallel transport and defined as:
	$$P_{\gamma,\gamma(b),\gamma(a)}(v){:}=V(\gamma(b)),\forall a,b\in R \text{,} v\in T_{\gamma(a)}M,$$
	where $V$ is the unique vector field satisfying $\nabla_{\gamma^\prime(t)}V=0$ for all $t$ and $V(\gamma(a))=v$. Without any confusion, $\gamma$ can be omitted when it is a minimal geodesic joining $x$ to $y$.
	\end{definition}
\begin{definition}(\cite{31}, p.3)
	The exponential mapping $exp_p : T_pM\to M$ is deﬁned by $exp_pv=r_v(1)$, where $r_{v}$ is the geodesic defined by the initial position $p$ and its velocity $v$, and $T_{p}M$ denotes $M$ the tangent space at $p$, which the expression is given:
	$$exp_pv:=\begin{cases}\cos(\|v\|)p+\sin(\|v\|)\frac v{\|v\|},v\in T_pM/\{0\},\\p,v=0,&\end{cases}$$
    and for all $t\in R$ have
	$$exp_ptv:=\begin{cases}\cos(\|tv\|)p+\sin(\|tv\|)\frac v{\|v\|},v\in T_pM/\{0\},\\p,v=0.&\end{cases} $$
\end{definition}
\begin{remark}
	We can also use the expression above for denoting the geodesic starting at the $p\in M$ with velocity $v\in T_pM$.
\end{remark}
\begin{definition}(\cite{31}, p.3)
	The inverse of the exponential mapping is given by the following equation
	\begin{equation}\label{eq2.2}
		exp_p^{-1}q:=\begin{cases}\frac{arccos\langle p,q\rangle}{\sqrt{1-\langle p,q\rangle^2}}(I-pp^T)q,q\notin\{p,-p\},\\0,q=p.&\end{cases}
	\end{equation}
	and by \eqref{eq2.1} and \eqref{eq2.2}:
	$$d(p,q){:}=\|exp_p^{-1}q\|,p,q\in M.$$
\end{definition}
\begin{proposition}(\cite{30}, p.149, Theorem 3.1) \label{pr2.1}
	$exp_p : T_pM\to M,\forall p\in M$ is a homomorphic mapping and for any two points $p,q\in M$, there exists a unique minimal geodesic $r_{p,q}(t)=exp_ptexp_p^{-1}q$, where $t\in[0,1]$ and the $exp_p^{-1}q$ denotes the vector obtained by projecting the geodesic line $r_{p,q}(t)$ in its tangent space.
\end{proposition}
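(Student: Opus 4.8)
The plan is to prove the two assertions separately, relying throughout on the explicit closed-form expressions of Definition 2.4 and of \eqref{eq2.2} rather than on abstract Riemannian machinery, since the sphere is one of the rare manifolds where the exponential map can be computed by hand. First I would show that $exp_p$ is a homeomorphism onto its image by exhibiting \eqref{eq2.2} as a genuine two-sided inverse. Writing $q=exp_pv$ for $v\in T_pM$ and taking the Euclidean inner product of the defining formula $exp_pv=\cos(\|v\|)p+\sin(\|v\|)\frac{v}{\|v\|}$ with $p$, the relations $\|p\|=1$ and $\langle p,v\rangle=0$ collapse it to $\langle p,q\rangle=\cos(\|v\|)$, which recovers the radius $\|v\|=arccos\langle p,q\rangle$ provided $\|v\|<\pi$, i.e. $q\neq-p$. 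Projecting onto the tangent space through $(I-pp^T)q=q-\langle p,q\rangle p$ then isolates the direction of $v$ and reproduces \eqref{eq2.2} exactly; smoothness of both maps away from the antipodal point upgrades the resulting bijection on $M\setminus\{-p\}$ to a homeomorphism (indeed a diffeomorphism).

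Second, for the minimal geodesic I would set $v:=exp_p^{-1}q$ and define $r_{p,q}(t):=exp_p(tv)$. By the Remark following Definition 2.4 this is precisely the geodesic issuing from $p$ with initial velocity $v$; it satisfies $r_{p,q}(0)=p$ and, by the first part, $r_{p,q}(1)=exp_p(exp_p^{-1}q)=q$, so it joins $p$ to $q$. Since geodesics carry constant speed, $\|r_{p,q}^{\prime}(t)\|\equiv\|v\|=\|exp_p^{-1}q\|=d(p,q)$, whence its arc length equals $\int_0^1\|v\|\,dt=d(p,q)=arccos\langle p,q\rangle$; by Definition 2.1 this is exactly the intrinsic distance, so $r_{p,q}$ is a minimum geodesic. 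For uniqueness I would note that any minimal geodesic from $p$ to $q$ is, after reparametrization proportional to arc length on $[0,1]$, of the form $t\mapsto exp_p(tw)$ for some $w\in T_pM$ with $\|w\|=d(p,q)<\pi$; the endpoint condition $exp_pw=q$ together with the injectivity established in the first part forces $w=exp_p^{-1}q=v$, so the competing geodesic coincides with $r_{p,q}$.

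The step I expect to be the main obstacle is not the algebra but the careful handling of the domain restriction implicit in the word \emph{local}. On the full sphere $S^2$ the map $exp_p$ is not injective, as it wraps the whole tangent space around the sphere, and between antipodal points there are infinitely many minimizing geodesics, so both the homeomorphism claim and the uniqueness claim genuinely fail. The argument only closes on a geodesically convex neighborhood where $\|v\|<\pi$, equivalently $q\neq-p$, which is exactly why \eqref{eq2.2} is declared solely for $q\notin\{p,-p\}$. I would therefore make explicit the standing assumption that $M$ lies inside such a neighborhood, and record that the injectivity radius $\pi$ of the unit sphere is what guarantees both that $arccos$ returns the correct branch and that $(I-pp^T)q\neq0$, so that the inverse formula is well defined and the branch of the square root in \eqref{eq2.2} is unambiguous.
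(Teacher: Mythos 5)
The paper does not actually prove this proposition: it is imported wholesale from \cite{30} (p.~149, Theorem 3.1), where it follows from the general Riemannian theory of normal neighborhoods, so there is no internal argument to compare yours against line by line. Your route is genuinely different and, for this specific setting, more informative: rather than citing abstract existence--uniqueness theory for geodesics, you verify by direct computation that the closed-form expressions for $exp_p$ and for \eqref{eq2.2} are mutually inverse (the key steps $\langle p, exp_pv\rangle=\cos(\|v\|)$ and $(I-pp^T)exp_pv=\sin(\|v\|)\frac{v}{\|v\|}$ are correct, and $\sqrt{1-\langle p,q\rangle^2}=\sin(\arccos\langle p,q\rangle)$ closes the loop), and you then read off minimality from the constant-speed arc-length identity $l(r_{p,q})=\|exp_p^{-1}q\|=\arccos\langle p,q\rangle=d(p,q)$, with uniqueness reduced to the injectivity you already established. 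What the citation buys the paper is generality (the same statement holds on any complete manifold inside the injectivity radius); what your computation buys is an explicit identification of that radius as $\pi$ and, more importantly, the observation---absent from the paper---that the proposition as literally stated is false on the full sphere: $exp_p$ is not injective on all of $T_pM$, and antipodal pairs admit infinitely many minimizing geodesics, so both conclusions require the standing ``local'' hypothesis ($q\notin\{p,-p\}$, equivalently $\|exp_p^{-1}q\|<\pi$) that \eqref{eq2.2} quietly encodes. Your explicit flagging of this domain restriction is not a gap in your argument but a needed correction to the statement; the only cosmetic point is that ``homomorphic'' in the proposition should be read as ``homeomorphic (indeed diffeomorphic) onto its image,'' which is how you treat it.
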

\begin{proposition}(\cite{32}, p.150, Lemma 3.2) \label{pr2.2}
	The exponential mapping and its inverse are continuous on $M$.
\end{proposition}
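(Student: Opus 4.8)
The plan is to verify continuity directly from the explicit closed-form expressions for $exp_p$ and $exp_p^{-1}$, handling the smooth regions and the isolated points where the piecewise formulas change their definition separately.

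First I would establish continuity of the exponential mapping. On the region $v\neq0$, the formula $\cos(\|v\|)p+\sin(\|v\|)\frac{v}{\|v\|}$ is built from the continuous maps $v\mapsto\|v\|$, $\cos$, $\sin$ and $v\mapsto v/\|v\|$ by composition, multiplication and addition, so it is continuous (indeed jointly in $(p,v)$ on $TM$). The only delicate point is $v=0$. There I would use the elementary estimate $\left\|\sin(\|v\|)\frac{v}{\|v\|}\right\|=|\sin(\|v\|)|\le\|v\|$ together with $\cos(\|v\|)\to1$ to conclude that $exp_pv\to p=exp_p0$ as $\|v\|\to0$, which matches the value prescribed by the definition and hence yields continuity at the origin of each tangent space.

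Next I would treat the inverse mapping \eqref{eq2.2}. For $q\notin\{p,-p\}$ the factor $(I-pp^{T})q$ is linear in $q$, and $\frac{\arccos\langle p,q\rangle}{\sqrt{1-\langle p,q\rangle^{2}}}$ is continuous because its denominator vanishes exactly when $\langle p,q\rangle=\pm1$, i.e. when $q=\pm p$; thus the product is continuous off the diagonal and the antipode. The crux is continuity at $q=p$, where numerator and denominator both vanish. Writing $t=\langle p,q\rangle$ and substituting $t=\cos\theta$ with $\theta\to0^{+}$ converts the ratio into $\theta/\sin\theta\to1$, while simultaneously $(I-pp^{T})q\to(I-pp^{T})p=0$ since $\|p\|=1$. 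Hence $exp_p^{-1}q\to0=exp_p^{-1}p$, establishing continuity along the diagonal.

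The main obstacle is resolving the $0/0$ indeterminacy at the singular points of the two piecewise formulas, in particular the limit $\frac{\arccos t}{\sqrt{1-t^{2}}}\to1$; once this is dispatched by the trigonometric substitution, everything else reduces to the continuity of elementary functions and of projections. I would also remark that the antipodal point $q=-p$ is excluded throughout, consistently with the local character of $M$, on which minimal geodesics and therefore $exp_p^{-1}$ are uniquely determined by Proposition \ref{pr2.1}.
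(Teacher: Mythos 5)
Your argument is correct, but it is not the route the paper takes: the paper offers no proof at all and simply cites the statement from Do Carmo (reference [32], p.~150, Lemma 3.2), where continuity (in fact smoothness) of the exponential map is obtained from the general Riemannian machinery --- smooth dependence of geodesics on their initial conditions together with the inverse function theorem inside the injectivity radius. You instead verify everything by hand from the explicit closed-form expressions available on $S^2$, and the two delicate points are handled properly: the bound $\bigl\|\sin(\|v\|)\tfrac{v}{\|v\|}\bigr\|\le\|v\|$ settles continuity of $exp_p$ at $v=0$, and the substitution $t=\cos\theta$ turns the $0/0$ ratio $\arccos t/\sqrt{1-t^2}$ into $\theta/\sin\theta\to 1$ while the projection factor $(I-pp^{T})q$ tends to $0$, so the product correctly recovers the prescribed value $exp_p^{-1}p=0$. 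Your exclusion of the antipode $q=-p$ is also the right caveat, since the formula genuinely blows up there and the claim is only local. What your approach buys is a self-contained, elementary verification (including joint continuity in $(p,v)$, which is actually slightly more than the proposition asserts) that requires no Riemannian background; what the paper's citation buys is brevity and the stronger conclusion of smoothness, valid on any complete manifold rather than just the sphere. Either is acceptable here, but be aware that if the ambient results later need differentiability of $exp_p^{-1}$ (as the paper's discussion of $JV_{y_k}$ does), your continuity-only computation would have to be upgraded, whereas the cited lemma already delivers it.
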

\begin{definition}(\cite{30}, p.58)
	The subset $C\subset M$ is called geodesic convex if and only if for any two points $x,y\in C$, the geodesic connecting $x$ to $y$ is included in $C$, i.e., if $\gamma : [a, b]\rightarrow M$ is a geodesic such that $x=\gamma(a)$ and $y=\gamma(b)$, then for all $t\in[0,1],\gamma((1-t)a+tb)\in C $.
\end{definition}
\begin{remark}
	According to Proposition \ref{pr2.1}, on $M$, a subset $C\subset M$ is geodesically convex if and only if
	$$exp_xtexp_x^{-1}y\in C$$
	for all $x,y\in C,t\in[0,1]$.
\end{remark}
\begin{definition}
	A point set $K\subseteq\mathbb{R}^n$ is called a pointed cone if it satisfies
	\begin{itemize}
		\item[(1)] $a\in K, \lambda\geq0\Longrightarrow\lambda a\in K$;
		\item[(2)] $a\in K\mathrm{~and~}-a\in K\Longrightarrow a=0$.
	\end{itemize}
\end{definition}
\begin{definition} \label{de2.8}
	Let $p$ be a point on the sphere $S^{2}$, and $v$ be the tangent vector (unit vector) at the point $p$, so the directional derivative of the function $f : S^2\to S^2$ along the $v$ direction at point $p$ is defined as:
	$$\partial_vf(p){:}=\lim_{t\to0}\frac{f\left(\exp_p(tv)\right)-f(p)}t,$$
	where $\exp_{p}(tv)$ is the point that starts from point $p$ and travels $t$ unit distance on the sphere along the tangent vector $v$.
\end{definition}
\begin{definition}
	On the sphere $S^{2}$, we say that the function $f : S^2\to S^2$ is smooth if for any point $p$ on the sphere and any tangent vector $v$, the directional derivative $\partial_vf(p)$ exist and are continuous. In other words, the $f$ is a function that is everywhere differentiable on the sphere.
\end{definition}

Finally we recall the concept of directed distance function, which was first introduced by Hiriart-Urruty \cite{33} to analyze the geometric structure of a non-smooth optimization problem and to obtain the necessary optimality conditions, directed distance function has been widely used in several studies. Such as finding Lagrange multipliers for vector optimization problems and carving out various solutions of vector optimization problems \cite{34,35,36}, optimality conditions for D.C vector optimization problems \cite{37}, scalarization of vector optimization \cite{38}, robustness of multiobjective optimization \cite{39}, optimality conditions for vector optimization \cite{40} and set-valued optimization \cite{41,42} etc.
\begin{definition}
	Let $A$ be a subset of $R^{m}$, the functions $\triangle_A : {R}^m\to{R}$ defined below is called a directed distance function:
	$$\triangle_A(y){:}=d_A(y)-d_{R^m\setminus A}(y),\forall y\in R^m,$$
	among $d_A(y)=\inf\{\|y-a\|,a\in A\}$ refers to the point $y\in {R}^m$ to the set $A$ as a function of the distance to the set.
\end{definition}

\begin{lemma}(\cite{36}, Proposition 3.2) \label{le2.1}
	Define $\varphi_C(y)=\Delta_{-C}(y),\forall y\in R^m$. Then the next conclusion holds:
	\begin{itemize}
		\item[(i)] $\varphi_{\mathcal{C}}$ is a $1$-Lipschitzian function;
		\item[(ii)] For any $y\in int(-C)$, there is $\varphi_C(y)<0$; for any $y\in bd(-C)$, there are $\varphi_C(y)=0$; for any $y\in R^m\backslash(-C)=int(R^m\backslash(-C))$, there are $\varphi_C(y)>0$;
		\item[(iii)] $\varphi_{\mathcal{C}}$ is convex;
		\item[(iv)] $\varphi_{\mathcal{C}}$ is positively homogeneous;
		\item[(v)] For all $y_1,y_2\in\mathbb{R}^m$, $\varphi_C(y_1+y_2)\leq\varphi_C(y_1)+\varphi_C(y_2)$ and $\varphi_C(y_1)-\varphi_C(y_2)\leq\varphi_C(y_1-y_2)$;
		\item[(vi)] Let $y_1,y_2\in\mathbb{R}^m$, if $y_1\preccurlyeq_C(\prec_C)y_2$, then $\varphi_C(y_1)\leq(<)\varphi_C(y_2)$.
	\end{itemize}
\end{lemma}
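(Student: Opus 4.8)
The plan is to handle items (ii) and (i) directly from the definition $\varphi_C = \Delta_{-C} = d_{-C} - d_{R^m\setminus(-C)}$, then to reduce the analytic statements (iii)--(iv) to a single support-type representation of $\varphi_C$, and finally to obtain (v) and (vi) as formal consequences. Throughout I assume, as in the order-cone setting, that $C$ is a nonempty closed convex pointed cone with nonempty interior, so that $-C$ is closed and convex, $0\in\mathrm{bd}(-C)$, and the dual cone $C^*=\{\xi:\langle\xi,c\rangle\ge0,\ \forall c\in C\}$ is nontrivial. For (ii) I would argue purely from the signs of the two distance terms. If $y\in\mathrm{int}(-C)$, then $y\in-C$ gives $d_{-C}(y)=0$, while interiority forces $d_{R^m\setminus(-C)}(y)>0$, hence $\varphi_C(y)<0$. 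If $y\in\mathrm{bd}(-C)$, both $-C$ and its complement accumulate at $y$, so $d_{-C}(y)=d_{R^m\setminus(-C)}(y)=0$ and $\varphi_C(y)=0$. If $y\notin-C$, then $y$ lies in the open set $R^m\setminus(-C)$, whence $d_{R^m\setminus(-C)}(y)=0$ while $d_{-C}(y)>0$, giving $\varphi_C(y)>0$.

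For the $1$-Lipschitz property (i) I would not merely add the Lipschitz constants of the two distance terms, since that only yields the constant $2$; instead I would split into cases. When $y_1,y_2$ lie on the same side (both in $-C$, or both outside), $\varphi_C$ agrees with $\pm$ a single distance function, which is $1$-Lipschitz. In the mixed case, say $y_1\in-C$ and $y_2\notin-C$, the segment $[y_1,y_2]$ meets $\mathrm{bd}(-C)$ at some $z$; then $d_{-C}(y_2)\le\|y_2-z\|$ and $d_{R^m\setminus(-C)}(y_1)\le\|y_1-z\|$, and since $z$ lies on the segment $\|y_1-z\|+\|z-y_2\|=\|y_1-y_2\|$, so that $|\varphi_C(y_1)-\varphi_C(y_2)|=d_{-C}(y_2)+d_{R^m\setminus(-C)}(y_1)\le\|y_1-y_2\|$.

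The crux of the remaining items is the representation
$$\varphi_C(y)=\max_{\xi\in C^*,\ \|\xi\|=1}\langle\xi,y\rangle,\qquad y\in R^m.$$
I would prove it from the support-function identity $\sigma_{-C}=\delta_{C^*}$ (a consequence of $-C$ being a cone) together with convex separation: for $y\notin-C$ a separating functional lands in $C^*$ and realizes $d_{-C}(y)$, whereas for $y\in\mathrm{int}(-C)$ the distance to $\mathrm{bd}(-C)$ must be matched by the same maximum. This value-matching on the interior of $-C$ is the main obstacle, since there the signed distance and the maximum of linear forms must be reconciled through the distance-to-boundary formula for convex bodies. Granting the representation, (iv) is immediate because each $\langle\xi,\cdot\rangle$ is positively homogeneous (alternatively one checks directly that $d_{-C}$ and $d_{R^m\setminus(-C)}$ are positively homogeneous, as $-C$ and its complement are invariant under positive scaling), and (iii) is immediate because $\varphi_C$ is then a pointwise supremum of linear, hence convex, functions.

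Items (v) and (vi) would then follow formally. Being convex by (iii) and positively homogeneous by (iv), $\varphi_C$ is sublinear, so $\varphi_C(y_1+y_2)\le\varphi_C(y_1)+\varphi_C(y_2)$; applying this to $y_1=(y_1-y_2)+y_2$ and rearranging gives $\varphi_C(y_1)-\varphi_C(y_2)\le\varphi_C(y_1-y_2)$, the second half of (v). For (vi), $y_1\preccurlyeq_C y_2$ means $y_2-y_1\in C$, i.e. $y_1-y_2\in-C$, so (ii) yields $\varphi_C(y_1-y_2)\le0$ and the inequality of (v) gives $\varphi_C(y_1)\le\varphi_C(y_2)$; if instead $y_1\prec_C y_2$, then $y_1-y_2\in\mathrm{int}(-C)$, so $\varphi_C(y_1-y_2)<0$ by (ii) and the same chain produces the strict inequality.
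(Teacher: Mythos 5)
The paper does not actually prove this lemma; it is imported verbatim from Zaffaroni's Proposition 3.2 in \cite{36}, so there is no in-paper argument to compare against. Judged on its own, your proof is essentially correct and follows the standard line for the oriented distance function. Items (ii), (i), (v) and (vi) are fully and correctly argued: the sign analysis in (ii) is exactly right (using closedness of $-C$ so that $\mathrm{bd}(-C)\subset -C$, and openness of the complement), the three-case Lipschitz argument with the boundary crossing point $z$ on the segment is the correct way to get the constant $1$ rather than $2$, the direct scaling argument for (iv) is complete, and the derivations of (v) from sublinearity and of (vi) from (ii) plus (v) are clean formal consequences. Note that for (vi) your chain needs $\varphi_C\le 0$ on all of $-C$, not just on $\mathrm{int}(-C)\cup\mathrm{bd}(-C)$ separately, but since $-C$ is closed these two pieces exhaust $-C$, so this is fine.

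The one soft spot is (iii), which in your plan rests entirely on the representation $\varphi_C(y)=\max_{\xi\in C^*,\ \|\xi\|=1}\langle\xi,y\rangle$, and you explicitly defer the value-matching on $\mathrm{int}(-C)$ ("granting the representation"). That step is true but it is the only nontrivial point, so it should be closed: for a closed convex set $A$ with nonempty interior and $y\in\mathrm{int}(A)$, the distance $d_{R^m\setminus A}(y)$ equals the radius of the largest ball centered at $y$ contained in $A$, and $B(y,r)\subset A$ holds iff $\langle\xi,y\rangle+r\le\sigma_A(\xi)$ for all unit $\xi$, whence $d_{R^m\setminus A}(y)=\inf_{\|\xi\|=1}\bigl(\sigma_A(\xi)-\langle\xi,y\rangle\bigr)$ and therefore $\Delta_A(y)=\sup_{\|\xi\|=1}\bigl(\langle\xi,y\rangle-\sigma_A(\xi)\bigr)$ on all of $R^m$; specializing to $A=-C$ with $\sigma_{-C}=\delta_{C^*}$ gives your formula, and with it convexity of $\varphi_C$ as a supremum of affine (here linear) functions. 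This dual route is in fact the one Zaffaroni takes, so with that inscribed-ball identity supplied your argument is a complete proof; without it, (iii) is asserted rather than proved.
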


\section{Generalized cone order optimization problem on local sphere $M$} \noindent
\setcounter{equation}{0}

In order to establish an optimization problem on the local sphere $M$, we need to define a new ordering relation.

Let $p$ be a given point in $M$, $C_{p}$ be a given cone on the tangent space $T_{p}M$, and the interior of cone $C_{p}$ is represented by $\operatorname{int}C_p$. Suppose that $C_{p}$ is a closed, convex and pointed cone with nonempty interior.
\begin{definition} \label{de3.1}
	For any $y\in M$, we define
	$$C_y:=P_{y,p}C_p:=\left\{P_{y,p}c_p\in T_yM:\forall c_p\in C_p\subset T_pM\right\},$$
	by the parallel transport from $p$ to $y$, and let
	$$\tilde{C}_y{:}=\{exp_yc_y:\forall c_y\in C_y\}$$
	be a subset of $M$ by the exponential mapping.
\end{definition}

It is worth noting that $C_{y}$ is a convex cone based on a given point $p\in M$ and the convex cone $C_p\subset T_pM$.
\begin{remark} \label{re3.1}
	From Definition \ref{de3.1} we know that for any point $x\in M$, in its corresponding tangent space $T_{x}M$, the given cone $C_{p}$ can be parallel transport to obtain a cone $C_x\subset T_xM$, and since the spherical curvature is fixed, the cone $C_{p}$ and the cone $C_{x}$ are isometric isomorphisms. 
\end{remark}

Now we can give the ordering relation between any two points $x,y\in M$ using the given cone $C_{p}$.

\begin{definition}
	Let $x,y$ be any given point on $M$, then
	$$y\preccurlyeq_{\tilde{C}_p}x$$
	if and only if $exp_x^{-1}y\in C_p$,
	besides,
	$$y\prec_{\tilde{C}_p}x$$
	if and only if $exp_x^{-1}y\in intC_p$.
\end{definition}
\begin{remark} \label{re3.2}
	It is easy to see that the ordering relation on $M$ is equivalent to the ordering relation on $T_{p}M$ relative to $C_{p}$. The latter is often mentioned in the field of vector optimization problems and vector variational inequalities (see \cite{43,44}). In other words,
	$$y\preccurlyeq_{\tilde{C}_p}x\Leftrightarrow0\preccurlyeq_{C_p}exp_x^{-1}y\Leftrightarrow exp_x^{-1}y\in C_p,$$
	$$y\prec_{\tilde{C}_p}x\Leftrightarrow0\prec_{C_p}exp_x^{-1}y\Leftrightarrow exp_x^{-1}y\in intC_p;$$
	and
	$$y\not\preccurlyeq_{\tilde{C}_p}x\Leftrightarrow0\not\preccurlyeq_{C_p}exp_x^{-1}y\Leftrightarrow exp_x^{-1}y\not\in C_p,$$
	$$y\not\prec_{\tilde{C}_p}x\Leftrightarrow0\not\prec_{C_p}exp_x^{-1}y\Leftrightarrow exp_x^{-1}y\not\in intC_p.$$
\end{remark}

If $F(x) : M\to M$ is a smooth function on a local sphere $M$, we will study the following generalized optimization problem (GOP) for a given convex cone $C_p\subset T_pM$,
\begin{equation} \label{wt3.1}
	\begin{aligned}&min_{C_p}F(x),\\&\mathrm\;{s.t.}\;x\in M.\end{aligned}
\end{equation}

Note that $min_{C_p}$ denotes the optimization model generated by the cone $C_p$ induced generalized ordering relation.
\begin{remark}
	If $M$ reduces to the Euclidean space $R^{n}$, then for all $x\in M$, there is $T_xM=M=R^n$, problem \eqref{wt3.1} degenerates into a vector optimization problem about cones in $R^{n}$, which is considered and studied in \cite{19}.
\end{remark}

Throughout the paper, we know that the ordering relation on $M$ is equivalent to the ordering relation on $T_pM$ relative to $C_p$. For the optimization problem \eqref{wt3.1}, let $y=F(x)$ then
$$y_{k+1}\preccurlyeq_{\tilde{C}_p}y_k\Leftrightarrow0\preccurlyeq_{C_p}exp_{y_k}^{-1}y_{k+1}\Leftrightarrow exp_{y_k}^{-1}y_{k+1}\in C_p,$$
$$y_{k+1}\prec_{\tilde{C}_p}y_k\Leftrightarrow0\prec_{C_p}exp_{y_k}^{-1}y_{k+1}\Leftrightarrow exp_{y_k}^{-1}y_{k+1}\in intC_p,$$
where $exp_{y_k}^{-1}y_{k+1}\in T_{y_k}M$ is a vector in the tangent space $T_{y_k}M$, we define the function
$$V_{y_k}:T_{y_k}M\longrightarrow T_{y_k}M,V_{y_k}(z)=-exp_{y_k}^{-1}z,$$
let
$$V_{y_k}(z_k)=0,V_{y_k}(z_{k+1})=-exp_{y_k}^{-1}y_{k+1},$$
so
$$V_{y_k}(z_k)-V_{y_k}(z_{k+1})=exp_{y_k}^{-1}y_{k+1},$$
where $V_{y_k}(z_k)$ and $V_{y_k}(z_k+1)$ represent the two endpoints of the vector $exp_{y_k}^{-1}y_{k+1}$. After organizing them, we obtain: If
$$y_{k+1}\preccurlyeq_{\tilde{C}_p}y_k,$$
then,
$$exp_{y_k}^{-1}y_{k+1}\in C_p,$$
$$V_{y_k}(z_k)-V_{y_k}(z_{k+1})\in C_p,$$
$$V_{y_k}(z_{k+1})\preccurlyeq_{C_p}V_{y_k}(z_k),$$
i.e.,
\begin{equation}
	y_{k+1}\preccurlyeq_{\tilde{C}_p}y_k\Leftrightarrow V_{y_k}(z_{k+1})\preccurlyeq_{C_p}V_{y_k}(z_k),
\end{equation}
similarly, it can be concluded that
\begin{equation}
	y_{k+1}\prec_{\tilde{C}_p}y_k\Leftrightarrow V_{y_k}(z_{k+1})\prec_{C_p}V_{y_k}(z_k),
\end{equation}
therefore, finding the iterative descent direction $d_k$ of function $V_{y_k}(z)$ at $z_k$ to get the next step $z_{k+1}$, which indirectly make $y_{k+1}\preccurlyeq_{\tilde{C}_p}y_k.$

To calculate the iteration direction $d_k$ in $T_{y_k}M$, we use the directional distance function.

In this paper, for convenience, we define $A=-C_{y_k}$ in Definition \ref{de2.8}. In the following we will use the function $\varphi_{C_{y_k}}:T_{y_k}M\longrightarrow R$, defined as follows:
$$\varphi_{C_{y_k}}\left(V_{y_k}(z)\right)=\Delta_{-C_{y_k}}\left(V_{y_k}(z)\right)=d_{-C_p}\left(V_{y_k}(z)\right)-d_{T_{y_k}M\setminus-C_p}\left(V_{y_k}(z)\right),\forall V_{y_k}(z)\in T_{y_k}M,$$
the function $\varphi_{\mathcal{C}_{\mathcal{Y}_k}}$ has all the properties described in Lemma \ref{le2.1}.

It is known that the function $V_{y_k}(z)=-exp_{y_k}^{-1}z$, which is easy to know $V_{y_k}:T_{y_k}M\longrightarrow T_{y_k}M$ is continuously differentiable from Definition \ref{de2.3} and Proposition \ref{pr2.2}, noting that its Jacobian is given by $JV_{y_k}(z)$, and we replace it by the following equation: $JV_{y_k}(z)=[\nabla v_1(z)\quad\nabla v_2(z)]^T,z\in T_{y_k}M$.

By Remark \ref{re3.1}, we know that the cone $C_{p}$ and the cone $C_{y_k}$ are isometric isomorphisms, so for the function $V_{y_k}(z)$, we have
$$y_{k+1}\preccurlyeq_{\tilde{C}_{y_k}}y_k\Leftrightarrow V_{y_k}(z_{k+1})\preccurlyeq_{C_{y_k}}V_{y_k}(z_k)\Leftrightarrow V_{y_k}(z_{k+1})\preccurlyeq_{C_p}V_{y_k}(z_k)\Leftrightarrow y_{k+1}\preccurlyeq_{\tilde{C}_p}y_k,$$
$$y_{k+1}\prec_{\tilde{C}_{y_k}}y_k\Leftrightarrow V_{y_k}(z_{k+1})\prec_{C_{y_k}}V_{y_k}(z_k)\Leftrightarrow V_{y_k}(z_{k+1})\prec_{C_p}V_{y_k}(z_k)\Leftrightarrow y_{k+1}\prec_{\tilde{C}_p}y_k,$$
thus, the original optimization problem \eqref{wt3.1} on the sphere is equivalent to the following vector optimization problem \eqref{wt3.4} on the tangent space: 
\begin{equation} \label{wt3.4}
	\begin{aligned}&min_{C_p}V_{y_k}(z),\\&\mathrm\;{s.t.}\;z\in\Omega.\end{aligned}
\end{equation}
where $\Omega\subset T_{y_k}M$ is a nonempty closed convex compact set.

\begin{definition} \label{de3.3}
	If the point $\overline{z}\in\Omega $ is a weakly efficient solution (or weakly Pareto solution) of problem \eqref{wt3.4}, then the $\overline{z}\ $ corresponding to $\overline{x}\in M$ is called a spherical weakly effective solution (or weakly Pareto solution) of problem \eqref{wt3.1}; if the point $\overline{z}\in\Omega $ is a efficient solution (or Pareto solution) of problem \eqref{wt3.4}, then the $\overline{z}\ $ corresponding to $\overline{x}\in M$ is called a spherically effective (or Pareto solution) of problem \eqref{wt3.1}. 
\end{definition}

By reference \cite{26}, we have a necessary, but not sufﬁcient, ﬁrst-order optimality condition for problem \eqref{wt3.4} at $z\in\Omega$, which is
$$JV_{y_k}(z)(\Omega-z)\bigcap(-intC_p)=\emptyset,$$
where
$$JV_{y_k}(z)(\Omega-z)=\begin{Bmatrix}JV_{y_k}(z)(s-z),s\in\Omega\end{Bmatrix},$$
and
$$JV_{y_k}(z)(s-z)=(<\nabla v_1(z),s-z>,<\nabla v_2(z),s-z>)^T,$$
clearly, this first-order optimality condition is equivalent to:
$$JV_{y_k}(z)(s-z)\notin-intC_p,\forall s\in\Omega.$$
\begin{definition}
	A sufficient necessary condition for problem \eqref{wt3.4}: $\overline{z}\in\Omega $ is a stationary point of problem \eqref{wt3.4} if and only if
	\begin{equation} \label{eq3.5}
		JV_{y_k}(\bar{z})(\Omega-\bar{z})\bigcap(-intC_p)=\emptyset.
	\end{equation}
\end{definition}
\begin{remark} \label{re3.4}
	From Lemma \ref{le2.1} (ii) it follows that if $\overline{z}\in\Omega $ is the stationary point of problem \eqref{wt3.4}, then
	$$\varphi_{C_{y_k}}\left(JV_{y_k}(\bar{z})(s-\bar{z})\right)\geq0,\forall s\in\Omega.$$
\end{remark}
\begin{proposition} \label{pr3.1}
If $\overline{z}\in\Omega $ is not the stationary point of problem \eqref{wt3.4}, then there exists a $\overline{s}\in\Omega $ such that $\bar{s}-\bar{z}$ is a decreasing direction of $V_{\mathcal{Y}_k}$ at $\bar{z}$.
\end{proposition}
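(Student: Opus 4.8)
The plan is to prove the statement directly: negate the stationarity condition \eqref{eq3.5} to extract a feasible candidate direction, and then verify that this direction strictly decreases $V_{y_k}$ in the cone order, using the first-order expansion of $V_{y_k}$ together with the properties of the directed distance function collected in Lemma \ref{le2.1}.

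First, since $\bar{z}\in\Omega$ is \emph{not} a stationary point, condition \eqref{eq3.5} fails, i.e.
$$JV_{y_k}(\bar{z})(\Omega-\bar{z})\bigcap(-\mathrm{int}\,C_p)\neq\emptyset.$$
Hence there exists $\bar{s}\in\Omega$ with $JV_{y_k}(\bar{z})(\bar{s}-\bar{z})\in-\mathrm{int}\,C_p$. Set $d:=\bar{s}-\bar{z}$. Invoking Lemma \ref{le2.1}(ii) (together with the isometric isomorphism between $C_p$ and $C_{y_k}$ recorded in Remark \ref{re3.1}), this membership is equivalent to
$$\varphi_{C_{y_k}}\left(JV_{y_k}(\bar{z})d\right)=:-\delta<0.$$

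Next I would show that $d$ is indeed a decreasing direction, i.e. that $V_{y_k}(\bar{z}+td)\prec_{C_p}V_{y_k}(\bar{z})$ for all sufficiently small $t>0$. Because $V_{y_k}$ is continuously differentiable, the first-order Taylor expansion gives
$$V_{y_k}(\bar{z}+td)-V_{y_k}(\bar{z})=t\,JV_{y_k}(\bar{z})d+r(t),\qquad \|r(t)\|=o(t).$$
Applying $\varphi_{C_{y_k}}$ and using subadditivity (Lemma \ref{le2.1}(v)), positive homogeneity (Lemma \ref{le2.1}(iv)) and $1$-Lipschitzness (Lemma \ref{le2.1}(i)) together with $\varphi_{C_{y_k}}(0)=0$ yields
$$\varphi_{C_{y_k}}\!\left(V_{y_k}(\bar{z}+td)-V_{y_k}(\bar{z})\right)\le t\,\varphi_{C_{y_k}}\!\left(JV_{y_k}(\bar{z})d\right)+\|r(t)\|=-t\delta+o(t).$$
Since $\delta>0$, the right-hand side is strictly negative for all small enough $t>0$; by Lemma \ref{le2.1}(ii) this means $V_{y_k}(\bar{z}+td)-V_{y_k}(\bar{z})\in-\mathrm{int}\,C_p$, that is, $V_{y_k}(\bar{z}+td)\prec_{C_p}V_{y_k}(\bar{z})$. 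Feasibility is automatic, since $\Omega$ is convex and $\bar{z},\bar{s}\in\Omega$, so $\bar{z}+td=(1-t)\bar{z}+t\bar{s}\in\Omega$ for every $t\in[0,1]$.

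The negation of \eqref{eq3.5} and the bookkeeping with Lemma \ref{le2.1} are routine. The one place requiring care, and what I regard as the main obstacle, is controlling the remainder $r(t)$ inside the nonlinear functional $\varphi_{C_{y_k}}$: one must peel off the linear term via subadditivity and then absorb $\varphi_{C_{y_k}}(r(t))$ through the Lipschitz bound $|\varphi_{C_{y_k}}(r(t))|\le\|r(t)\|=o(t)$, so that the strictly negative leading term $-t\delta$ dominates for small $t$. A secondary subtlety is keeping the identification of the cones $C_p$ and $C_{y_k}$ (Remark \ref{re3.1}) consistent when passing between the order $\prec_{C_p}$ and the functional $\varphi_{C_{y_k}}$.
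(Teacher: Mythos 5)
Your proposal is correct, and its first step coincides exactly with the paper's: negate the stationarity condition \eqref{eq3.5} to produce $\bar{s}\in\Omega$ with $JV_{y_k}(\bar{z})(\bar{s}-\bar{z})\in-\mathrm{int}\,C_p$. Where you diverge is in verifying that this $\bar{s}-\bar{z}$ actually descends: the paper merely rewrites $JV_{y_k}(\bar{z})(\bar{s}-\bar{z})$ as the limit of difference quotients and immediately asserts $V_{y_k}(\bar{z}+t(\bar{s}-\bar{z}))\prec_{C_p}V_{y_k}(\bar{z})$ for \emph{all} $t>0$, which does not follow (the first-order information only controls small $t$, and the claim for all $t>0$ is false without convexity). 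Your argument --- Taylor expansion, then peeling off the linear term with subadditivity and positive homogeneity of $\varphi_{C_{y_k}}$ and absorbing the remainder with the $1$-Lipschitz bound so that $-t\delta$ dominates for small $t$ --- is essentially the technique the paper itself deploys later in the proof of Corollary \ref{co4.1} (there the remainder $R(t)$ is handled by openness of $-\mathrm{int}\,C_p$ rather than by the scalarization, but the content is the same). So your route is not structurally different, but it supplies the quantitative small-$t$ estimate that the paper's own proof of this proposition omits, and it proves the statement in the standard (local) sense of a descent direction rather than the overstated global one.
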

\begin{proof}
	Because $\overline{z}\in\Omega $ is not a stationary point of problem \eqref{wt3.4}, then there exists a $\overline{s}\in\Omega $ such that
	$$JV_{y_k}(\bar{z})(\bar{s}-\bar{z})\in-intC_p,$$
	i.e.,
	$$JV_{y_k}(\bar{z})(\bar{s}-\bar{z})=\lim_{t\to0}\frac{V_{y_k}(\bar{z}+t(\bar{s}-\bar{z}))-V_{y_k}(\bar{z})}t\prec_{C_p}0, \bar{s}\in\Omega, \forall t>0,$$
	therefore, $\bar{s}-\bar{z}$ is a descent direction of $V_{\mathcal{Y}_k}$ at $\bar{z}$, we have
	$$V_{y_k}(\bar{z}+t(\bar{s}-\bar{z}))\prec_{C_p}V_{y_k}(\bar{z}),\forall t>0.$$
\end{proof}
\begin{theorem} \label{th3.1}
	\begin{itemize}
		\item[(i)] If $\overline{z}\in\Omega $ is a weakly effective solution of problem \eqref{wt3.4}, then $\overline{z}\in\Omega $ is a stationary point;
		\item[(ii)] If $V_{\mathcal{Y}_k}$ is $C_p$-convex on $\Omega$ and $\overline{z}\in\Omega $ is a stationary point of problem \eqref{wt3.4}, then $\overline{z}\in\Omega $ is a weakly effective solution.
	\end{itemize}
\end{theorem}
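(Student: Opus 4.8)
The plan is to prove both implications by contraposition, reducing each to the separation properties of the cone $C_p$ recorded in Lemma \ref{le2.1} together with the descent analysis of Proposition \ref{pr3.1}. Throughout I will use that, by Remark \ref{re3.2} and the definition of the order, $\overline z$ is a weakly effective solution of \eqref{wt3.4} precisely when there is no $s\in\Omega$ with $V_{y_k}(s)-V_{y_k}(\overline z)\in-\operatorname{int}C_p$, while $\overline z$ fails to be stationary precisely when some $\overline s\in\Omega$ satisfies $JV_{y_k}(\overline z)(\overline s-\overline z)\in-\operatorname{int}C_p$.

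For (i) I would argue by contraposition: assume $\overline z$ is not stationary. Then Proposition \ref{pr3.1} furnishes $\overline s\in\Omega$ for which $\overline s-\overline z$ is a descent direction of $V_{y_k}$ at $\overline z$, so that $V_{y_k}(\overline z+t(\overline s-\overline z))\prec_{C_p}V_{y_k}(\overline z)$ for all sufficiently small $t>0$. Since $\Omega$ is convex and $\overline z,\overline s\in\Omega$, the point $z_t:=(1-t)\overline z+t\overline s=\overline z+t(\overline s-\overline z)$ lies in $\Omega$ for every $t\in[0,1]$. Choosing such a small $t>0$ produces a feasible $z_t\in\Omega$ with $V_{y_k}(z_t)-V_{y_k}(\overline z)\in-\operatorname{int}C_p$, which shows $\overline z$ is not a weakly effective solution and completes the contrapositive.

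For (ii) I would again argue by contradiction, assuming $\overline z$ is stationary but not weakly effective, so that some $s\in\Omega$ gives $V_{y_k}(s)-V_{y_k}(\overline z)\in-\operatorname{int}C_p$. The \emph{key step} is the first-order characterization of $C_p$-convexity: starting from the convexity inequality $V_{y_k}\big((1-t)\overline z+ts\big)\preccurlyeq_{C_p}(1-t)V_{y_k}(\overline z)+tV_{y_k}(s)$, rearranging, dividing by $t>0$, and letting $t\to0^+$, the closedness of $C_p$ yields
$$JV_{y_k}(\overline z)(s-\overline z)\preccurlyeq_{C_p}V_{y_k}(s)-V_{y_k}(\overline z).$$
Applying the monotonicity of the directed distance function (Lemma \ref{le2.1}(vi)) together with Lemma \ref{le2.1}(ii) gives
$$\varphi_{C_{y_k}}\big(JV_{y_k}(\overline z)(s-\overline z)\big)\le\varphi_{C_{y_k}}\big(V_{y_k}(s)-V_{y_k}(\overline z)\big)<0,$$
the last inequality because $V_{y_k}(s)-V_{y_k}(\overline z)\in-\operatorname{int}C_p$. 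This contradicts Remark \ref{re3.4}, which asserts $\varphi_{C_{y_k}}(JV_{y_k}(\overline z)(s-\overline z))\ge0$ at the stationary point $\overline z$; hence $\overline z$ must be weakly effective.

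The routine parts, namely unwinding the order relations and using convexity of $\Omega$, are straightforward. The \emph{main obstacle} is the limiting argument in (ii): I must justify that the difference quotients $\tfrac1t\big(V_{y_k}(\overline z+t(s-\overline z))-V_{y_k}(\overline z)\big)$ converge to $JV_{y_k}(\overline z)(s-\overline z)$, which is exactly the continuous differentiability of $V_{y_k}$, and that the limit of elements of the closed cone $C_p$ remains in $C_p$. An equivalent, cone-algebraic route avoiding $\varphi_{C_{y_k}}$ is to write $JV_{y_k}(\overline z)(s-\overline z)=\big(V_{y_k}(s)-V_{y_k}(\overline z)\big)-\big(V_{y_k}(s)-V_{y_k}(\overline z)-JV_{y_k}(\overline z)(s-\overline z)\big)$ and invoke the convex-cone property $\operatorname{int}C_p+C_p\subseteq\operatorname{int}C_p$ to place the left-hand side in $-\operatorname{int}C_p$, again contradicting stationarity; I would keep this as a fallback should the directed-distance estimate require more care.
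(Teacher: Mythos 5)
Your proposal is correct, and its skeleton coincides with the paper's: part (i) is the same contradiction/contraposition via Proposition \ref{pr3.1} (you add the useful observation, omitted in the paper, that convexity of $\Omega$ keeps the trial point $\overline z+t(\overline s-\overline z)$ feasible), and part (ii) rests on the same first-order inequality $JV_{y_k}(\overline z)(s-\overline z)\preccurlyeq_{C_p}V_{y_k}(s)-V_{y_k}(\overline z)$ coming from $C_p$-convexity, which you derive from the definition while the paper simply invokes it. Where you genuinely diverge is the last step of (ii): the paper argues directly, passing from stationarity to ``$JV_{y_k}(\overline z)(z-\overline z)\notin-\operatorname{int}C_p$'' and then to ``$JV_{y_k}(\overline z)(z-\overline z)\succcurlyeq_{C_p}0$'', and chains the cone inequalities to get $V_{y_k}(z)\succcurlyeq_{C_p}V_{y_k}(\overline z)$; that intermediate implication is not valid in general, since the complement of $-\operatorname{int}C_p$ is strictly larger than $C_p$. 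Your contradiction argument --- monotonicity of $\varphi_{C_{y_k}}$ (Lemma \ref{le2.1}(vi)) plus the sign characterization (Lemma \ref{le2.1}(ii)) against Remark \ref{re3.4} --- needs only $\notin-\operatorname{int}C_p$ and is therefore rigorous where the paper is sloppy; your ``fallback'' via $\operatorname{int}C_p+C_p\subseteq\operatorname{int}C_p$ is an equally valid repair and is essentially what the paper's direct chain implicitly requires. Either of your two routes is preferable to the printed proof; no gaps remain in your argument.
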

\begin{proof}
	\begin{itemize}
		\item[(i)] Suppose that $\overline{z}\in\Omega $ is a weakly effective solution of problem \eqref{wt3.4}. If $\bar{z}$ is not a stationary point, then \eqref{eq3.5} does not hold, according to Proposition \ref{pr3.1} can know, there exists a $z\in\Omega $ such that $z-\bar{z}$ is a descent direction of $V_{\mathcal{Y}_k}$, so
		$$V_{y_k}(\bar{z}+t(z-\bar{z}))\prec_{C_p}V_{y_k}(\bar{z}),\forall t>0,$$
		this is in contrast to $\bar{z}$ is a weakly effective solution of problem \eqref{wt3.4}. Therefore (i) holds. 
		\item[(ii)] Take any $z\in\Omega $, since $\bar{z}$ is a stationary point of problem \eqref{wt3.4}, according to Remark \ref{re3.4} there are
		$$\varphi_{C_{y_k}}\left(JV_{y_k}(\bar{z})(z-\bar{z})\right)\geq0,$$
		i.e.,
		$$JV_{y_k}(\bar{z})(z-\bar{z})\notin-intC_p,$$
		$$JV_{y_k}(\bar{z})(z-\bar{z})\succcurlyeq_{C_p}0.$$
		
		Due to $V_{\mathcal{Y}_k}$ is $C_{p}$-convex on $\Omega$, we have
		$$V_{y_k}(z)\succcurlyeq_{C_p}JV_{y_k}(\bar{z})(z-\bar{z})+V_{y_k}(\bar{z})\succcurlyeq_{C_p}V_{y_k}(\bar{z}),\forall z\in\Omega $$
		Therefore, $\bar{z}$ is a weakly efficient solution of problem \eqref{wt3.4}.		
	\end{itemize}
\end{proof}

\section{Improved algorithm for generalized cone order optimization problem on local sphere $M$}\noindent
\setcounter{equation}{0}

In this paper, we cannot directly calculate the descent direction of every point $y_k$. Instead, we search for the corresponding function's descent direction on the tangent space at this point to achieve the descent goal of $y_k$. For example, for $y_k$, to find its next step ${y}_{k+1}$, we need to define a smooth function $V_{\mathcal{Y}_k}$ in the tangent space $T_{y_k}M$, and then use the conditional gradient algorithm for vector optimization in reference \cite{26} to calculate the descent direction, such that we get $V_{y_k}(z_{k+1})\preccurlyeq_{C_p}V_{y_k}(z_k)$, which can be obtained: $y_{k+1}\preccurlyeq_{\tilde{C}_p}y_k$.

In the tangent space $T_{y_k}M$, for any given $z\in\Omega $, we introduce a useful auxiliary function $\Psi_Z : \Omega\to R$,
\begin{equation}
	\Psi_z(s)=\varphi_{C_{y_k}}\left(JV_{y_k}(z)(s-z)\right).
\end{equation}

To obtain the descent direction of $V_{y_k}$ at $z_k$, we need to consider the following auxiliary scalar optimization problem:
\begin{equation} \label{wt4.2}
    \min_{s\in\Omega}\Psi_Z(s).
\end{equation}

According to Lemma \ref{le2.1}, we know that the above $\Psi_{Z}(s)$ is a convex and continuous function, combined with the fact that $\Omega$ is a compact set, we know that problem \eqref{wt4.2} must have an optimal solution. The optimal solution and optimal value of problem \eqref{wt4.2} are defined as $s(z)$ and $\theta(z)$, respectively, i.e.,
\begin{equation} \label{eq4.3}
	s(z)=arg\min_{s\in\Omega}\Psi_z(s),
\end{equation}
\begin{equation} \label{eq4.4}
	\theta(z)=\min_{s\in\Omega}\Psi_z(s)=\Psi_z(s(z)).
\end{equation}

According to Proposition \ref{pr3.1}, we obtain the search direction of the objective function $V_{\mathcal{Y}_k}$ at $z$, defined as follows:
\begin{definition}
	For any given $z\in\Omega $, the search direction of the conditional gradient method for $V_{\mathcal{Y}_k}$ at $z$ is defined as:
	$$d(z)=s(z)-z,$$
	where $s(z)$ is generated by \eqref{eq4.3}.
\end{definition}

The following are some properties related to the stationary point of $\theta(z)$.

\begin{proposition} \label{pr4.1}
	Let $\theta:\Omega\to R$ be the function defined by \eqref{eq4.4}. Then, the following conclusion holds:
	\begin{itemize}
		\item[(i)] For any $z\in\Omega $, we have $\theta(z)\leq0$;
		\item[(ii)] $z\in\Omega $ is a stationary point of problem \eqref{wt3.4} if and only if $\theta(z)=0$.
	\end{itemize}
\end{proposition}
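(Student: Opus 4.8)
The plan is to derive both parts directly from the properties of the directed-distance function $\varphi_{C_{y_k}}$ recorded in Lemma \ref{le2.1}, together with the stationarity condition \eqref{eq3.5} and Remark \ref{re3.4}. For (i) I would use that $z$ is itself feasible in the minimization \eqref{eq4.4}, so $\theta(z)\le\Psi_z(z)$. Evaluating at $s=z$ gives $\Psi_z(z)=\varphi_{C_{y_k}}\!\left(JV_{y_k}(z)(z-z)\right)=\varphi_{C_{y_k}}(0)$, and positive homogeneity from Lemma \ref{le2.1}(iv) (or the boundary case $0\in\mathrm{bd}(-C_{y_k})$ in Lemma \ref{le2.1}(ii)) forces $\varphi_{C_{y_k}}(0)=0$. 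Hence $\theta(z)\le 0$ for every $z\in\Omega$, which is exactly (i).

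For the forward implication of (ii) I would assume $z$ is a stationary point and read off from \eqref{eq3.5} that $JV_{y_k}(z)(s-z)\notin -\mathrm{int}C_p$ for all $s\in\Omega$; each such vector then lies in $\mathrm{bd}(-C_p)$ or in $R^m\setminus(-C_p)$, so Lemma \ref{le2.1}(ii) gives $\varphi_{C_{y_k}}\!\left(JV_{y_k}(z)(s-z)\right)\ge 0$, which is precisely the content of Remark \ref{re3.4}. Thus $\Psi_z(s)\ge 0$ on all of $\Omega$, whence $\theta(z)=\min_{s\in\Omega}\Psi_z(s)\ge 0$, and combining with (i) pins down $\theta(z)=0$.

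For the converse I would argue by contraposition: if $z$ is not stationary, then by the negation of \eqref{eq3.5} (equivalently Proposition \ref{pr3.1}) there exists $\bar s\in\Omega$ with $JV_{y_k}(z)(\bar s-z)\in\mathrm{int}(-C_p)$, and the strict part of Lemma \ref{le2.1}(ii) yields $\Psi_z(\bar s)=\varphi_{C_{y_k}}\!\left(JV_{y_k}(z)(\bar s-z)\right)<0$. Hence $\theta(z)\le\Psi_z(\bar s)<0$, so $\theta(z)\ne 0$; this is the contrapositive of the desired implication, and together with the forward direction it establishes the equivalence in (ii).

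I do not expect a genuine obstacle here, since the argument is essentially sign bookkeeping once Lemma \ref{le2.1}(ii) is in hand. The one point I would keep explicit is the identification of the cone labelling $\varphi$: by Remark \ref{re3.1} the transported cone $C_{y_k}$ and the reference cone $C_p$ are isometrically isomorphic, so the sign trichotomy of Lemma \ref{le2.1}(ii) transfers verbatim and the interior/boundary membership tests appearing in \eqref{eq3.5} line up with the regions used to evaluate $\varphi_{C_{y_k}}$. Making that identification explicit is what turns the informal chain ``$z$ stationary $\Leftrightarrow$ $\varphi_{C_{y_k}}\ge 0$ on $\Omega-z$ $\Leftrightarrow$ $\theta(z)=0$'' into a rigorous proof.
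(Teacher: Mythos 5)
Your proposal is correct and follows essentially the same route as the paper: part (i) via $\theta(z)\le\Psi_z(z)=\varphi_{C_{y_k}}(0)=0$, and part (ii) via Remark \ref{re3.4} together with the sign trichotomy of Lemma \ref{le2.1}(ii). The only cosmetic difference is that you prove the sufficiency direction by contraposition (non-stationary $\Rightarrow\theta(z)<0$) whereas the paper argues directly that $\theta(z)=0$ forces $JV_{y_k}(z)(s-z)\notin-\mathrm{int}\,C_p$ for all $s\in\Omega$; these are logically interchangeable and rest on the same facts.
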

\begin{proof}
	\begin{itemize}
		\item[(i)] For any $z\in\Omega $, by means of \eqref{eq4.3} with \eqref{eq4.4} and $\varphi_{C_{y_k}}(0)=0$, we have
		$$\theta(z)=\min_{s\in\Omega}\Psi_z(s)\leq\Psi_z(z)=\varphi_{C_{y_k}}\left(JV_{y_k}(z)(z-z)\right)=\varphi_{C_{y_k}}(0)=0.$$ 
		\item[(ii)] Necessity. If $z\in\Omega $ is a stationary point of problem \eqref{wt3.4}, then according to Remark \ref{re3.4}: for any $s\in\Omega $, there is
		$$\varphi_{C_{y_k}}\left(JV_{y_k}(z)(s-z)\right)\geq0.$$
		From \eqref{eq4.3} we know that $s(z)\in\Omega $, and so:
		$$\theta(z)=\Psi_z(s(z))=\varphi_{C_{y_k}}\left(JV_{y_k}(z)(s(z)-z)\right)\geq0.$$
		By combining the above equation with (i), we can obtain:
		$$\theta(z)=0.$$
		
		Sufficiency. Let $\theta(z)=0$, according to \eqref{eq4.4}, we get
		$$0=\theta(z)\leq\Psi_z(s)=\varphi_{C_{y_k}}\left(JV_{y_k}(z)(s-z)\right),\forall s\in\Omega $$
		$$\Longrightarrow JV_{y_k}(z)(s-z)\bigcap(-intC_p)=\emptyset $$
		i.e.: $z$ is a stationary point of problem \eqref{wt3.4}.
	\end{itemize}
\end{proof}
\begin{remark}
	It follows from Proposition \ref{pr4.1} that $z\in\Omega $ is not a stationary point of problem \eqref{wt3.4} if and only if $\theta(z)<0$.
\end{remark}
\begin{corollary} \label{co4.1}
	If $\theta(z)<0$,$\forall z\in\Omega $, then for any $\beta\in(0,1)$, there exists $\hat{t}\in(0,1]$ such that
	$$V_{y_k}(z+td(z))\prec_{C_p}V_{y_k}(z)+\beta tJV_{y_k}(z)d(z),\forall t\in(0,\hat{t})$$
	where $d(z)=s(z)-z$, $s(z)$ is defined by \eqref{eq4.3}.
\end{corollary}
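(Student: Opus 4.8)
The plan is to derive this Armijo-type sufficient-decrease inequality from a first-order expansion of $V_{y_k}$ together with the openness of the cone $-intC_p$. The first step is to convert the scalar hypothesis $\theta(z)<0$ into a vectorial first-order descent statement. By \eqref{eq4.3} and \eqref{eq4.4} we have $\theta(z)=\Psi_z(s(z))=\varphi_{C_{y_k}}\left(JV_{y_k}(z)d(z)\right)<0$, so Lemma \ref{le2.1}(ii) immediately forces
$$JV_{y_k}(z)d(z)\in-intC_p,$$
that is, $JV_{y_k}(z)d(z)\prec_{C_p}0$. This is the quantitative descent margin that will make the whole estimate work.

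Next I would invoke the continuous differentiability of $V_{y_k}$ (guaranteed by Definition \ref{de2.3} and Proposition \ref{pr2.2}) to write the first-order expansion along the ray $z+td(z)$,
$$V_{y_k}(z+td(z))=V_{y_k}(z)+tJV_{y_k}(z)d(z)+r(t),$$
where the remainder satisfies $r(t)/t\to0$ as $t\to0^{+}$. Subtracting $V_{y_k}(z)+\beta tJV_{y_k}(z)d(z)$, dividing by $t>0$, and simplifying yields, for every small $t>0$,
$$\frac{1}{t}\Big(V_{y_k}(z+td(z))-V_{y_k}(z)-\beta tJV_{y_k}(z)d(z)\Big)=(1-\beta)JV_{y_k}(z)d(z)+\frac{r(t)}{t},$$
so this difference quotient converges to $(1-\beta)JV_{y_k}(z)d(z)$ as $t\to0^{+}$.

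The decisive geometric observation is that the limiting vector already lies strictly inside the cone. Since $\beta\in(0,1)$ gives $1-\beta>0$, and $-intC_p$ is a (convex) cone containing $JV_{y_k}(z)d(z)$, we obtain $(1-\beta)JV_{y_k}(z)d(z)\in-intC_p$. Because $-intC_p$ is open, some open ball about this point is contained in it; the convergence just established then produces $\hat{t}\in(0,1]$ so that the difference quotient lies in $-intC_p$ for every $t\in(0,\hat{t})$. Multiplying back by $t>0$ and using that $-intC_p$ is invariant under positive scaling, I conclude
$$V_{y_k}(z+td(z))-V_{y_k}(z)-\beta tJV_{y_k}(z)d(z)\in-intC_p,$$
which is exactly $V_{y_k}(z+td(z))\prec_{C_p}V_{y_k}(z)+\beta tJV_{y_k}(z)d(z)$ for all $t\in(0,\hat{t})$.

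I expect the main obstacle to be the careful control of the remainder $r(t)$: one must guarantee that the perturbation $r(t)/t$ cannot push the difference quotient out of $-intC_p$. This is precisely where the strict margin furnished by $\theta(z)<0$, i.e. the \emph{openness} of $-intC_p$, is indispensable. The argument handles $r(t)/t$ globally, through membership in an open cone, rather than componentwise; and it would genuinely fail for the closed cone $-C_p$, where a boundary limit could be destroyed by an arbitrarily small remainder.
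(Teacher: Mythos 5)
Your proof is correct and follows essentially the same route as the paper: both arguments extract $JV_{y_k}(z)d(z)\in-intC_p$ from $\theta(z)<0$ via Lemma \ref{le2.1}(ii), use the first-order expansion of $V_{y_k}$ along $z+td(z)$, and absorb the $o(t)$ remainder into the open cone around the point $(1-\beta)JV_{y_k}(z)d(z)$. Your remainder $r(t)$ is just $tR(t)$ in the paper's notation, so the two proofs are the same argument up to rescaling of the error term.
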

\begin{proof}
	Because $V_{\mathcal{Y}_k}$ is differentiable, we have
	$$V_{y_k}\big(z+td(z)\big)=V_{y_k}(z)+t\big(JV_{y_k}(z)d(z)+R(t)\big)$$
	where $\lim_{t\to0}R(t)=0$.
	From $\theta(z)<0$ we know that $\varphi_{C_{y_k}}\left(JV_{y_k}(z)(s(z)-z)\right)<0\Rightarrow JV_{y_k}(z)(s(z)-z)\in-intC_p$. This implies that for any $\beta\in(0,1),\quad(1-\beta)JV_{y_k}(z)(s(z)-z)\in-intC_p$. By the definition of open set, we have that there exists a $\hat{t}\in(0,1]$ such that, for $\forall t\in(0,\hat{t}],\quad\|R(t)\|$ is sufficiently small such that $R(t)+(1-\beta)JV_{y_k}(z)(s(z)-z)\in-intC_p$. This is equivalent to
	$$R(t)+(1-\beta)JV_{y_k}(z)(s(z)-z)\prec_{C_p}0$$
	$$\Leftrightarrow R(t)+JV_{y_k}(z)(s(z)-z)\prec_{C_p}\beta JV_{y_k}(z)(s(z)-z),$$
	therefore,
	$$V_{y_k}(z+td(z))=V_{y_k}(z)+t\big(JV_{y_k}(z)d(z)+R(t)\big)\prec_{C_p}V_{y_k}(z)+t\beta JV_{y_k}(z)d(z),\forall t\in(0,\hat{t}].$$
\end{proof}
\begin{proposition} \label{pr4.2}
	Let $\theta:\Omega\to R$ be the function defined by \eqref{eq4.4}, then $\theta$ is continuous on $\Omega$.
\end{proposition}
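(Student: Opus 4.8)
The plan is to establish continuity through a direct estimate of $\abs{\theta(z)-\theta(z')}$ for nearby points $z,z'\in\Omega$, exploiting that $\Omega$ is compact and that $\varphi_{C_{y_k}}$ is $1$-Lipschitz by Lemma \ref{le2.1}(i). First I would fix $z,z'\in\Omega$ and denote by $s(z')$ a minimizer realizing $\theta(z')$ as in \eqref{eq4.3}. Since $s(z')\in\Omega$ is feasible for the problem defining $\theta(z)$, I would use the suboptimality bound
\begin{equation*}
	\theta(z)-\theta(z')\le\Psi_z(s(z'))-\Psi_{z'}(s(z'))=\varphi_{C_{y_k}}\!\big(JV_{y_k}(z)(s(z')-z)\big)-\varphi_{C_{y_k}}\!\big(JV_{y_k}(z')(s(z')-z')\big).
\end{equation*}
The $1$-Lipschitz property of $\varphi_{C_{y_k}}$ then reduces the right-hand side to controlling the quantity $\norm{JV_{y_k}(z)(s(z')-z)-JV_{y_k}(z')(s(z')-z')}$.

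Next I would split this vector difference into two pieces by inserting and subtracting $JV_{y_k}(z)(s(z')-z')$, obtaining
\begin{equation*}
	JV_{y_k}(z)(z'-z)+\big(JV_{y_k}(z)-JV_{y_k}(z')\big)(s(z')-z').
\end{equation*}
Here the first term is bounded by $\norm{JV_{y_k}(z)}\,\norm{z'-z}$ and the second by $\norm{JV_{y_k}(z)-JV_{y_k}(z')}\,\operatorname{diam}(\Omega)$. Because $\Omega$ is compact and $V_{y_k}$ is continuously differentiable, $JV_{y_k}$ is bounded and uniformly continuous on $\Omega$, so both pieces tend to $0$ as $z'\to z$. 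Interchanging the roles of $z$ and $z'$ yields the reverse inequality, and hence $\abs{\theta(z)-\theta(z')}\to0$, which gives continuity (indeed uniform continuity) of $\theta$ on $\Omega$.

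I expect the main obstacle to be the minimizer-swap step: one must plug the optimizer $s(z')$ of the $z'$-problem into the $z$-problem (and conversely) and argue that this affects the estimate only through the Lipschitz bound, \emph{without} assuming the minimizer is unique — only its existence, which is already guaranteed by the compactness of $\Omega$ and continuity of $\Psi_z$. The remaining care is uniformity: the two error terms must be controlled by $\operatorname{diam}(\Omega)$ and by the uniform continuity of $JV_{y_k}$ on the compact set $\Omega$, which is precisely where compactness is indispensable. An alternative route would be to invoke Berge's maximum theorem, since $\Psi_{(\cdot)}(\cdot)$ is jointly continuous and the constraint set $\Omega$ is a constant compact correspondence; but the direct estimate above is more self-contained and additionally delivers the stronger uniform-continuity conclusion.
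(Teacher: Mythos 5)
Your proposal is correct and follows essentially the same route as the paper: both rely on swapping the minimizer of one subproblem into the other, the $1$-Lipschitz property of $\varphi_{C_{y_k}}$ from Lemma \ref{le2.1}(i), and the bounds $\norm{s(z')-z'}\le\operatorname{diam}(\Omega)$ together with the continuity of $JV_{y_k}$; the paper merely phrases this as a sequential $\limsup$/$\liminf$ argument (using the subadditivity of $\varphi_{C_{y_k}}$ in one direction), whereas your symmetric two-sided estimate is a slightly cleaner packaging that additionally yields uniform continuity.
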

\begin{proof}
	Take $\overline{z}\in\Omega $ and let $\left\{z_{k}\right\}$ be a sequence on $\Omega$ such that $\lim_{k\to\infty}z_k=\bar{z}$. To obtain the continuity of $\theta$ on $\Omega$, it is only necessary to prove that: $\lim_{k\to\infty}\theta(z_k)=\theta(\bar{z})$, i.e.,
	\begin{equation}
		\lim_{k\to\infty}\sup\theta(z_k)\leq\theta(\bar{z})\leq\lim_{k\to\infty}\inf\theta(z_k).
	\end{equation}
	
	Since $s(\bar{z})\in\Omega $, by \eqref{eq4.3} and \eqref{eq4.4} we have that for all $k$,
	\begin{equation} \label{ie4.6}
		\theta(z_k)=\min_{s\in\Omega}\Psi_{z_k}(s)\leq\Psi_{z_k}(s(\bar{z}))=\varphi_{C_{y_k}}\left(JV_{y_k}(z_k)(s(\bar{z})-z_k)\right).
	\end{equation}
	
	Since $V_{\mathcal{Y}_k}$ is continuously differentiable and Lemma \ref{le2.1} (i) states that the function $\varphi_{\mathcal{C}_{\mathcal{Y}_k}}$ is continuous, so take simultaneously $\lim_{k\to\infty}sup$ on both sides of inequality \eqref{ie4.6}, we have
	$$\lim_{k\to\infty}\sup\theta(z_k)\leq\lim\sup_{k\to\infty}\varphi_{Cy_k}\left(JV_{y_k}(z_k)(s(\bar{z})-z_k)\right)=\varphi_{Cy_k}\left(JV_{y_k}(\bar{z})(s(\bar{z})-\bar{z})\right)=\theta(\bar{z}).$$
	
	Let us show that $\theta(\bar{z})\leq\lim_{k\to\infty}\inf\theta(z_k)$. Obviously, we have:
	\begin{align}
		\theta(\bar{z}) & = \min_{s \in \Omega} \Psi_{\bar{z}}(s) \leq \Psi_{\bar{z}}\big(s(z_k)\big) \nonumber \\
		& = \varphi_{C_{y_k}}\big(JV_{y_k}(\bar{z})(s(z_k)-\bar{z})\big) \nonumber \\
		& = \varphi_{C_{y_k}}\left(JV_{y_k}(\bar{z})(s(z_k)-z_k+z_k-\bar{z})\right) \nonumber \\
		& = \varphi_{C_{y_k}}\left(JV_{y_k}(\bar{z})\Big(s(z_k)-z_k\Big) + JV_{y_k}(\bar{z})(z_k-\bar{z})\right) \nonumber \\
		& \leq \varphi_{C_{y_k}}\left(JV_{y_k}(\bar{z})(s(z_k)-z_k)\right) + \varphi_{C_{y_k}}\left(JV_{y_k}(\bar{z})(z_k-\bar{z})\right).
	\end{align}

	Taking simultaneously $\lim_{k\to\infty}inf$ on both sides of the above inequality, we get
	\begin{align}
		\lim\inf_{k\to\infty}\theta(\bar{z}) & = \theta(\bar{z}) \leq \lim\inf_{k\to\infty} \varphi_{C_{y_k}}\left(JV_{y_k}(\bar{z})(s(z_k)-z_k)\right) + \lim\inf_{k\to\infty} \varphi_{C_{y_k}}\left(JV_{y_k}(\bar{z})(z_k-\bar{z})\right) \nonumber \\
		& \leq \lim\inf_{k\to\infty} \varphi_{C_{y_k}}\left(JV_{y_k}(\bar{z})(s(z_k)-z_k)\right) \nonumber \\
		& = \lim\inf_{k\to\infty}\left(\theta(z_k) + \varphi_{C_{y_k}}\left(JV_{y_k}(\bar{z})(s(z_k)-z_k)\right) - \varphi_{C_{y_k}}\left(JV_{y_k}(z_k)(s(z_k)-z_k)\right)\right) \nonumber \\
		& \leq \lim\inf_{k\to\infty}\left(\theta(z_k) + \left\|JV_{y_k}(\bar{z})(s(z_k)-z_k) - JV_{y_k}(z_k)(s(z_k)-z_k)\right\|\right) \nonumber \\
		& = \lim\inf_{k\to\infty}\left(\theta(z_k) + \left\|(JV_{y_k}(\bar{z}) - JV_{y_k}(z_k))(s(z_k)-z_k)\right\|\right) \nonumber \\
		& \leq \lim\inf_{k\to\infty}\left(\theta(z_k) + \left\|JV_{y_k}(\bar{z}) - JV_{y_k}(z_k)\right\|\|s(z_k)-z_k\|\right),
		\label{ie4.8}
	\end{align}
	because $s(z_k),\quad z_k\in\Omega $ and knowing that $\Omega $ is a tight set, so $\|s(z_k)-z_k\|\leq diam(\Omega)<\infty $. This, combined with the first order continuously differentiability of $V_{\mathcal{Y}_k}$, we get $\lim_{k\to\infty}JV_{y_k}(z_k)=JV_{y_k}(\bar{z})$, and so $\|JV_{y_k}(\bar{z})-JV_{y_k}(z_k)\|\|s(z_k)-z_k\|\to0(k\to\infty)$. The above conclusion combined with \eqref{ie4.8} is given:
	$$\theta(\bar{z})\leq\lim\inf_{k\to\infty}\theta(z_k).$$
	Therefore: $\theta$ is continuous on $\Omega $.
\end{proof}

Based on the conclusions given earlier, we can describe the process of improving the conditional gradient method for solving the problem \eqref{wt3.4}.

\textbf{Improved conditional gradient algorithm}\noindent
\begin{itemize}
\item[1] (Initialization) Choose the initial point $x_{0}$ , get the corresponding $y_{0}$ and its tangent space $T_{y_0}M$, let $k\leftarrow0$;
\item[2] In the tangent space $T_{y_k}M$, define the function $V_{y_k}(z)=-exp_{y_k}^{-1}z$;
\item[3] Solve the problem \eqref{wt4.2} to obtain $s(z_k)$ and $\theta(z_k)$ as in \eqref{eq4.3} and \eqref{eq4.4};
\item[4] If $\theta(z_k)=0$, then stop. Otherwise, proceed to the next step, Armijo line search;
\item[5] (Armijo line search) Let $\beta\in(0,1)$ and $\delta\in(0,1)$, choose the maximum $t_k$ from $J=\{\delta^n|n=0,1,2,...\}$ that satisfies \eqref{eq4.9} as the step size:
\begin{equation} \label{eq4.9}
	V_{y_k}(z_k+t_kd(z_k))\preccurlyeq_{C_p}V_{y_k}(z_k)+\beta t_kJV_{y_k}(z_k)d(z_k)
\end{equation}
\item[6] Define $z_{k+1}=z_k+t_kd(z_k),t_k\in(0,1]$. Make $k+1\leftarrow k$, then go back to step 3.
\end{itemize}

At the $k$-th iteration, we solve problem \eqref{wt4.2} with $z=z_k$. Let $s(z_k)$ and $\theta(z_k)$ denote respectively the optimal solution and the optimal value of problem \eqref{wt4.2} at $k$-th iteration. The direction of descent at $k$-th iteration is calculated by $d(z_k)=s(z_k)-z_k$. If $\theta(z_k)\neq0$, then we can use $d(z_k)$ with a step size strategy to look for a new solution $z_{k+1}$ which improves $z_k$. For convenience, in the text we can use $s_k,\theta_k,d_k$ instead of $s(z_k),\theta(z_k),d(z_k)$.

\begin{remark} \label{re4.2}
	Because $z_0\in\Omega,s_k\in\Omega, t_k\in(0,1]$, and $\Omega $ is convex set, it follows by induction that $z_k\in\Omega, \forall k\in Z$, i.e., the sequence $\{z_k\}$ generated by the improved conditional gradient algorithm is contained in $\Omega $.
\end{remark}

\section{Convergence analysis}\noindent
\setcounter{equation}{0}

It is worth noting that in the above algorithm we are placing the point $y_{k}$ into its corresponding tangent space $T_{y_k}M$ indirectly to find its descent direction, but in our convergence analysis, we do not need to consider the tangent spaces corresponding to other points. We explain reasons for this in Remark \ref{re3.1} and Remark \ref{re3.2}. Therefore, the descent direction and step size of any point on the tangent space in the algorithm can find elements with the same properties on its isomorphic tangent space, and there is no need to consider the corresponding tangent space for each step.

In this section, we analyze the convergence of the above algorithms.

Below we present some properties related to the iteration points of the improved conditional gradient algorithm with the Armijo step condition \eqref{eq4.9}.

\begin{proposition}
	For all $k$, we have
	\begin{itemize}
	\item[(i)] $V_{y_k}(z_{k+1})\prec_{C_p}V_{y_k}(z_k);$
	\item[(ii)] $t_k|\theta(z_k)|\leq\beta^{-1}\left(\varphi_{C_{y_k}}\left(V_{y_k}(z_{k+1})\right)-\varphi_{C_{y_k}}\left(V_{y_k}(z_k)\right)\right).$
	\end{itemize}
\end{proposition}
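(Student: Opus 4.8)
The plan is to extract both inequalities from the Armijo acceptance rule \eqref{eq4.9}, feeding it into the properties of $\varphi_{C_{y_k}}$ collected in Lemma \ref{le2.1} together with the stationarity characterization in Proposition \ref{pr4.1}. A preliminary observation, used in both parts, is that the line search in step 5 is reached only when step 4 fails, so $\theta(z_k)\neq0$; combined with Proposition \ref{pr4.1}(i) this forces $\theta(z_k)<0$. By the definition \eqref{eq4.4} of $\theta$ and Lemma \ref{le2.1}(ii) this is equivalent to $JV_{y_k}(z_k)d(z_k)\in-\mathrm{int}\,C_p$, i.e. $JV_{y_k}(z_k)d(z_k)\prec_{C_p}0$.

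For part (i), I would first note that since $-\mathrm{int}\,C_p$ is a cone and $\beta t_k>0$, the scaled vector $\beta t_k JV_{y_k}(z_k)d(z_k)$ still lies in $-\mathrm{int}\,C_p$, so that $V_{y_k}(z_k)+\beta t_k JV_{y_k}(z_k)d(z_k)\prec_{C_p}V_{y_k}(z_k)$. The Armijo condition \eqref{eq4.9} supplies $V_{y_k}(z_{k+1})\preccurlyeq_{C_p}V_{y_k}(z_k)+\beta t_k JV_{y_k}(z_k)d(z_k)$. Chaining these two relations via transitivity of the mixed order (valid because $\mathrm{int}\,C_p+C_p\subseteq\mathrm{int}\,C_p$ for a convex cone) gives $V_{y_k}(z_{k+1})\prec_{C_p}V_{y_k}(z_k)$.

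For part (ii), the idea is to transport the Armijo inequality through $\varphi_{C_{y_k}}$. Monotonicity (Lemma \ref{le2.1}(vi)) applied to \eqref{eq4.9} yields $\varphi_{C_{y_k}}(V_{y_k}(z_{k+1}))\leq\varphi_{C_{y_k}}(V_{y_k}(z_k)+\beta t_k JV_{y_k}(z_k)d(z_k))$; then subadditivity (Lemma \ref{le2.1}(v)) and positive homogeneity (Lemma \ref{le2.1}(iv)), using $\beta t_k>0$ and $\varphi_{C_{y_k}}(JV_{y_k}(z_k)d(z_k))=\theta(z_k)$, collapse the right side to $\varphi_{C_{y_k}}(V_{y_k}(z_k))+\beta t_k\theta(z_k)$. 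Rearranging, substituting $\theta(z_k)=-|\theta(z_k)|$, and dividing by $\beta>0$ then delivers the bound on $t_k|\theta(z_k)|$.

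The main thing to watch, rather than any deep obstacle, is the bookkeeping of order directions and signs: one must verify that $\varphi_{C_{y_k}}$ is monotone in the correct sense for the convention $a\preccurlyeq_{C_p}b\Leftrightarrow b-a\in C_p$, and that positive homogeneity is invoked only for the positive factor $\beta t_k$. I also observe that this derivation naturally produces the difference $\varphi_{C_{y_k}}(V_{y_k}(z_k))-\varphi_{C_{y_k}}(V_{y_k}(z_{k+1}))$ on the right-hand side, which is nonnegative by part (i) and Lemma \ref{le2.1}(vi) and hence compatible with the nonnegative left-hand side $t_k|\theta(z_k)|$; the order in which the two $\varphi$-values are written in the statement should be read in this sense.
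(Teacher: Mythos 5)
Your proof is correct and follows essentially the same route as the paper: part (i) from the Armijo rule \eqref{eq4.9} together with $JV_{y_k}(z_k)d(z_k)\in-\mathrm{int}\,C_p$ (since the line search is only entered when $\theta(z_k)<0$), and part (ii) by pushing \eqref{eq4.9} through Lemma \ref{le2.1}(iv)--(vi) to obtain $\varphi_{C_{y_k}}\left(V_{y_k}(z_{k+1})\right)\leq\varphi_{C_{y_k}}\left(V_{y_k}(z_k)\right)+\beta t_k\theta(z_k)$. You are also right about the sign: the derivation gives $t_k|\theta(z_k)|\leq\beta^{-1}\bigl(\varphi_{C_{y_k}}\left(V_{y_k}(z_k)\right)-\varphi_{C_{y_k}}\left(V_{y_k}(z_{k+1})\right)\bigr)$, so the difference as printed in the statement (and in the final display of the paper's own proof) is reversed, since by part (i) and Lemma \ref{le2.1}(vi) that printed difference is negative while the left-hand side is nonnegative.
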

\begin{proof}
	\begin{itemize}
	\item[(i)] It is known that $z_{k}$ is not a stationary point, so $JV_{y_k}(z_k)d(z_k)\prec_{C_p}0$, and by \eqref{eq4.9} it follows that
	$$V_{y_k}(z_{k+1})=V_{y_k}(z_k+t_kd(z_k))\prec_{C_p}V_{y_k}(z_k).$$
	\item[(ii)] For any $k$, by \eqref{eq4.9} and Lemma \ref{le2.1} (iv)-(vi), we have
	\begin{align}
		\varphi_{C_{y_k}}\left(V_{y_k}(z_{k+1})\right) & \leq \varphi_{C_{y_k}}\left(V_{y_k}(z_k)+\beta t_kJV_{y_k}(z_k)d(z_k)\right) \nonumber \\
		& \leq \varphi_{C_{y_k}}\left(V_{y_k}(z_k)\right) + \varphi_{C_{y_k}}\left(\beta t_kJV_{y_k}(z_k)d(z_k)\right) \nonumber \\
		& = \varphi_{C_{y_k}}\left(V_{y_k}(z_k)\right) + \beta t_k\varphi_{C_{y_k}}\left(JV_{y_k}(z_k)d(z_k)\right) \nonumber \\
		& = \varphi_{C_{y_k}}\left(V_{y_k}(z_k)\right) + \beta t_k\theta(z_k).
		\label{ie5.1}
	\end{align}
	It is known that $z_{k}$ is not a stationary point, then $\theta(z_k)<0$; and combined with \eqref{ie5.1}, we get
	$$t_k|\theta(z_k)|=-t_k\theta(z_k)\leq\frac{\varphi_{C_{y_k}}\left(V_{y_k}(z_{k+1})\right)-\varphi_{C_{y_k}}\left(V_{y_k}(z_k)\right)}\beta.$$
	\end{itemize}
\end{proof}
\begin{theorem} \label{th5.1}
	Let $\{z_k\}$ be the sequence generated by the improved conditional gradient algorithm with Armijo step condition \eqref{eq4.9}. Then, every cluster point of $\{z_k\}$ is a stationary point of problem \eqref{wt3.4}.
\end{theorem}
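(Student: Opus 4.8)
The plan is to combine the two estimates in the proposition preceding this theorem with the continuity of $\theta$ (Proposition \ref{pr4.2}) and the characterization ``$\theta(z)=0\Leftrightarrow z$ stationary'' (Proposition \ref{pr4.1}(ii)). Throughout one works in the fixed tangent space $T_{y_k}M$ with the fixed $C^1$ map $V_{y_k}$, as justified in Remarks \ref{re3.1} and \ref{re3.2}. Note first that the sequence is well defined: by Corollary \ref{co4.1} the Armijo search in Step 5 terminates whenever $\theta(z_k)<0$, so $\{z_k\}$ and $\{t_k\}$ exist; if some $\theta(z_k)=0$ the algorithm stops at a stationary point and there is nothing to prove, so we assume $\theta(z_k)<0$ for all $k$.

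The first step is to establish summability of $\{t_k|\theta(z_k)|\}$. By part (i) of the preceding proposition and Lemma \ref{le2.1}(vi), the scalar sequence $\{\varphi_{C_{y_k}}(V_{y_k}(z_k))\}$ is strictly decreasing; since $\Omega$ is compact and $z\mapsto\varphi_{C_{y_k}}(V_{y_k}(z))$ is continuous, it is bounded below, hence convergent. Summing the telescoping estimate of part (ii), in the form $t_k|\theta(z_k)|\le\beta^{-1}\big(\varphi_{C_{y_k}}(V_{y_k}(z_k))-\varphi_{C_{y_k}}(V_{y_k}(z_{k+1}))\big)$, over $k$ then gives $\sum_k t_k|\theta(z_k)|\le\beta^{-1}\big(\varphi_{C_{y_k}}(V_{y_k}(z_0))-\lim_k\varphi_{C_{y_k}}(V_{y_k}(z_k))\big)<\infty$, and in particular $t_k|\theta(z_k)|\to0$.

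Next, let $\bar z$ be a cluster point with $z_{k_j}\to\bar z$, and argue by contradiction, assuming $\theta(\bar z)<0$ (recall $\theta(\bar z)\le0$ by Proposition \ref{pr4.1}(i)). By continuity of $\theta$, $\theta(z_{k_j})\to\theta(\bar z)<0$, so $|\theta(z_{k_j})|$ stays bounded away from $0$; combined with $t_k|\theta(z_k)|\to0$ this forces $t_{k_j}\to0$. Passing to a further subsequence by compactness of $\Omega$, I may assume $s(z_{k_j})\to\bar s\in\Omega$ and set $\bar d:=\bar s-\bar z$. Since $t_{k_j}\to0$, for large $j$ the step $\tilde t_{k_j}:=t_{k_j}/\delta\le1$ was tried in the Armijo search and rejected, i.e.
$$V_{y_k}(z_{k_j}+\tilde t_{k_j}d_{k_j})-V_{y_k}(z_{k_j})-\beta\,\tilde t_{k_j}\,JV_{y_k}(z_{k_j})d_{k_j}\notin -C_p.$$
Dividing by $\tilde t_{k_j}>0$ (a cone is invariant under positive scaling) and letting $j\to\infty$, the first-order expansion of $V_{y_k}$ and continuity of its Jacobian give $\tilde t_{k_j}^{-1}\big(V_{y_k}(z_{k_j}+\tilde t_{k_j}d_{k_j})-V_{y_k}(z_{k_j})\big)\to JV_{y_k}(\bar z)\bar d$, so the left-hand expression converges to $(1-\beta)JV_{y_k}(\bar z)\bar d$. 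Because the complement of the closed cone $-C_p$ is open, its closure omits only $-\mathrm{int}C_p$, whence $(1-\beta)JV_{y_k}(\bar z)\bar d\notin -\mathrm{int}C_p$, and by positive homogeneity $JV_{y_k}(\bar z)\bar d\notin -\mathrm{int}C_p$; Lemma \ref{le2.1}(ii) then yields $\varphi_{C_{y_k}}(JV_{y_k}(\bar z)\bar d)\ge0$. On the other hand, passing to the limit in $\theta(z_{k_j})=\varphi_{C_{y_k}}(JV_{y_k}(z_{k_j})d_{k_j})$ shows $\theta(\bar z)=\varphi_{C_{y_k}}(JV_{y_k}(\bar z)\bar d)\ge0$, contradicting $\theta(\bar z)<0$. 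Hence $\theta(\bar z)=0$, and by Proposition \ref{pr4.1}(ii) $\bar z$ is a stationary point of problem \eqref{wt3.4}.

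The main obstacle is the contradiction paragraph, i.e. the regime $t_{k_j}\to0$: one must extract convergence of the minimizers $s(z_{k_j})$ via compactness (they need not converge as a full sequence, so the subsequence is essential), exploit the rejection of the trial step $t_{k_j}/\delta$, and pass to the limit carefully. The delicate point is that the strict relation ``$\notin -C_p$'' degrades to ``$\notin -\mathrm{int}C_p$'' in the limit because $-C_p$ is closed, which is precisely why the surviving factor $(1-\beta)$ and positive homogeneity are needed to recover a usable inequality on $JV_{y_k}(\bar z)\bar d$. By comparison the summability step is routine, resting only on monotonicity, compactness, and the telescoping bound.
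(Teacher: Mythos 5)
Your proof is correct and follows the same overall architecture as the paper's: first show $t_k|\theta(z_k)|\to 0$ from the Armijo decrease, then rule out $\theta(\bar z)<0$ by exploiting a rejected trial step when $t_{k_j}\to 0$. The differences are in the execution of both halves, and in each case your version is the more careful one. For the first half, the paper derives $\varphi_{C_{y_k}}(V_{y_k}(z_{k_j+1}))-\varphi_{C_{y_k}}(V_{y_k}(z_{k_j}))\to 0$ by appealing to continuity alone, which does not suffice since $z_{k_j+1}$ need not converge to the cluster point; your monotonicity-plus-telescoping argument is the correct justification. For the second half, the paper splits into the cases $\limsup t_{k_j}>0$ and $\limsup t_{k_j}=0$, and in the latter fixes $t=\delta^l$, passes to the limit in $i$, and invokes the contrapositive of Corollary \ref{co4.1}; you instead absorb the first case into the contradiction hypothesis, divide the rejected-step relation by $\tilde t_{k_j}=t_{k_j}/\delta$, and pass to the limit directly to get $(1-\beta)JV_{y_k}(\bar z)\bar d\notin-\mathrm{int}\,C_p$, which yields $\theta(\bar z)\ge 0$ without Corollary \ref{co4.1}. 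You also treat the minimizers correctly by extracting a limit point $\bar s$ of $s(z_{k_j})$ and identifying $\varphi_{C_{y_k}}(JV_{y_k}(\bar z)(\bar s-\bar z))$ with $\theta(\bar z)$ via continuity of $\theta$, whereas the paper tacitly assumes $d(z_{k_{j_i}})\to d(\hat z)=s(\hat z)-\hat z$, which would require continuity (indeed single-valuedness) of the argmin map. In short: same skeleton, but your finish is cleaner and closes two small gaps in the published argument.
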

\begin{proof}
	Let $\hat{z}\in T_{y_k}M$ be a cluster point of the sequence $\{z_k\}$, by Remark \ref{re4.2} we have $z_k\in\Omega$, combining with the closedness of $\Omega$, know that $\hat{z}\in\Omega$. So, there exists a subsequence $\left\{z_{k_j}\right\}$ of $\{z_k\}$, such that
	\begin{equation} \label{eq5.2}
	\lim_{j\to\infty}z_{k_j}=\hat{z}.
	\end{equation}
	
	By Propositions \ref{pr4.2} and \eqref{eq5.2}, we have that $\theta\left(z_{k_j}\right)\to\theta(\hat{z}),j\to\infty $. Therefore, by Proposition \ref{pr4.1} (ii), we only need to show that $\theta(\hat{z})=0$.
	Let $k=k_j$ in Proposition \ref{pr4.1} (ii), so
	\begin{equation} \label{ie5.3}
		0\leq t_{k_j}\left|\theta\left(z_{k_j}\right)\right|\leq\frac{\varphi_{Cy_k}\left(V_{y_k}\left(z_{k_{j+1}}\right)\right)-\varphi_{Cy_k}\left(V_{y_k}\left(z_{k_j}\right)\right)}{\beta}.
	\end{equation}
	
	It is known that $V_{\mathcal{Y}_k}$ is continuous and by Lemma \ref{le2.1} (i) we have the function $\varphi_{\mathcal{C}_{\mathcal{Y}_k}}$ is continuous, so $\varphi_{C_{y_k}}\left(V_{y_k}\left(z_{k_{j+1}}\right)\right)-\varphi_{C_{y_k}}\left(V_{y_k}\left(z_{k_j}\right)\right)\to0,j\to\infty $; combining with  \eqref{ie5.3} have: $\lim_{j\to\infty}t_{k_j}\left|\theta\left(z_{k_j}\right)\right|=0$, i.e.,
	\begin{equation}  \label{eq5.4}
		\lim_{j\to\infty}t_{k_j}\theta\left(z_{k_j}\right)=0.
	\end{equation}
	
	Since $t_k\in(0,1],\forall k$, we have the following two alternatives:
	\begin{equation} \label{ie5.5}
		\begin{array}{ccc}
			\mathrm{(a)}~\lim\limits_{j \to \infty} \sup t_{k_j}>0 ~or~ 	\mathrm{(b)}~\lim\limits_{j \to \infty} \sup t_{k_j}=0.
		\end{array}
	\end{equation}
	
	First, we suppose that  \eqref{ie5.5} (a) holds. Then, there exists a subsequence $\left\{t_{k_{j_i}}\right\}$ of $\left\{t_{k_j}\right\}$ converging to some $\hat{t}>0$. By  \eqref{eq5.2} we have: $\lim_{i\to\infty}z_{k_{j_i}}=\hat{z}$; and from  \eqref{eq5.4}, we know $\lim_{i\to\infty}t_{k_{j_i}}\theta\left(z_{k_{j_i}}\right)=0$, thus,
	$$\lim_{i\to\infty}\theta\left(z_{k_{j_i}}\right)=\theta(\hat{z})=0.$$
	
	Now, we assume that  \eqref{ie5.5} (b) holds. It is known that $\hat{z}\in\Omega$, by Proposition  \ref{pr4.1} (i) we have:
	\begin{equation} \label{ie5.6}
		\theta(\hat{z})\leq0.
	\end{equation}
	
	Obviously, $z_{k_j},s(z_{k_j})\in\Omega $ and
	$$\begin{Vmatrix}d(z_{k_j})\end{Vmatrix}=\begin{Vmatrix}s\left(z_{k_j}\right)-z_{k_j}\end{Vmatrix}\leq diam(\Omega)<\infty $$
	for all $j$, which implies that the sequence $\left\{d(z_{k_j})\right\}$ is bounded. Thus, we can take the subsequence $\left\{z_{k_{j_i}}\right\},\quad\left\{d(z_{k_{j_i}})\right\}$ and $\left\{t_{k_{j_i}}\right\}$, which converge to  $\hat{z}\mathrm{~,~}d(\hat{z})$ and $0$, respectively.
	Then, we take some ﬁxed but arbitrary $l\in N$ where $N$ denotes the set of natural numbers. For $\lim_{i\to\infty}t_{k_{j_i}}=0$, we have $t_{k_{j_i}}<\delta^l$ for $i$ large enough. This means that the Armijo condition \eqref{eq4.9} is not satisﬁed at $z=z_{k_{j_i}}$ for $t=\delta^l$, that is,
	$$V_{y_k}\left(z_{k_{j_i}}+\delta^ld\left(z_{k_{j_i}}\right)\right)\not\preccurlyeq_{C_p}V_{y_k}\left(z_{k_{j_i}}\right)+\beta\delta^lJV_{y_k}\left(z_{k_{j_i}}\right)d\left(z_{k_{j_i}}\right)$$
	or, equivalently,
	$$V_{y_k}\left(z_{k_{j_i}}+\delta^ld\left(z_{k_{j_i}}\right)\right)-V_{y_k}\left(z_{k_{j_i}}\right)-\beta\delta^lJV_{y_k}\left(z_{k_{j_i}}\right)d\left(z_{k_{j_i}}\right)\notin-C_p$$
	which means that
	\begin{equation} \label{eq5.7}
		V_{y_k}\left(z_{k_{j_i}}+\delta^ld\left(z_{k_{j_i}}\right)\right)-V_{y_k}\left(z_{k_{j_i}}\right)-\beta\delta^lJV_{y_k}\left(z_{k_{j_i}}\right)d\left(z_{k_{j_i}}\right)\in T_{y_k}M/(-C_p)=int(T_{y_k}M/(-C_p))
	\end{equation}
	where the last of equality holds in view of the closedness of $C_{p}$, by  \eqref{eq5.7} and Lemma \ref{le2.1} (ii), we have: 
	\begin{equation} \label{ie5.8}
		\varphi_{C_{y_k}}\left(V_{y_k}\left(z_{k_{j_i}}+\delta^ld\left(z_{k_{j_i}}\right)\right)-V_{y_k}\left(z_{k_{j_i}}\right)-\beta\delta^lJV_{y_k}\left(z_{k_{j_i}}\right)d\left(z_{k_{j_i}}\right)\right)>0.
	\end{equation}
	
	Since $V_{\mathcal{Y}_k}$ is continuously differentiable and $\varphi_{\mathcal{C}_{\mathcal{Y}_k}}$ is continuous, for both sides of  \eqref{ie5.8} take simultaneously $\underset{i\to\infty}{\operatorname*{lim}}$, we can get
	$$\lim_{i\to\infty}\varphi_{C_{y_k}}\left(V_{y_k}\left(z_{k_{j_i}}+\delta^ld\left(z_{k_{j_i}}\right)\right)-V_{y_k}\left(z_{k_{j_i}}\right)-\beta\delta^lJV_{y_k}\left(z_{k_{j_i}}\right)d\left(z_{k_{j_i}}\right)\right)$$
	\begin{equation}
		=\varphi_{C_{y_k}}\left(V_{y_k}\left(\hat{z}+\delta^ld(\hat{z})\right)-V_{y_k}(\hat{z})-\beta\delta^lJV_{y_k}(\hat{z})d(\hat{z})\right)\geq0,\forall l\in N,
	\end{equation}
	so
	$$V_{y_k}\left(\hat{z}+\delta^ld(\hat{z})\right)-V_{y_k}(\hat{z})-\beta\delta^lJV_{y_k}(\hat{z})d(\hat{z})\notin-intC_p$$
	\begin{equation} \label{eq5.10}
		\Rightarrow V_{y_k}\left(\hat{z}+\delta^ld(\hat{z})\right)\not\prec_{C_p}V_{y_k}(\hat{z})+\beta\delta^lJV_{y_k}(\hat{z})d(\hat{z})
	\end{equation}
	Since  \eqref{eq5.10} holds for any $l$, using Corollary \ref{co4.1}, we have: $\theta(\hat{z})\not<0$, combined with  \eqref{ie5.6}, we can obtain: $\theta(\hat{z})=0$.
\end{proof}

It follows from Theorem \ref{th3.1} and Theorem  \ref{th5.1} that the following result holds.

\begin{theorem} \label{th5.2}
	If $V_{\mathcal{Y}_k}$ is $C_{p}$-convex on $\Omega $, then the sequence $\left\{z_{k}\right\}$ generated by the improved conditional gradient algorithm with the Armijo step condition \eqref{eq4.9} converges to a weakly efficient solution of problem \eqref{wt3.4}.
\end{theorem}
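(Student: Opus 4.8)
The plan is to chain the two results named in the remark that precedes the statement, Theorem~\ref{th5.1} and Theorem~\ref{th3.1}(ii), evaluated at an arbitrary cluster point of the generated sequence. First I would record that a cluster point exists at all: by Remark~\ref{re4.2} the iterates satisfy $z_k\in\Omega$ for every $k$, and since $\Omega$ is compact the sequence $\{z_k\}$ admits at least one cluster point $\hat{z}$, which lies in $\Omega$ by closedness.

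Next I would apply Theorem~\ref{th5.1} verbatim: every cluster point of $\{z_k\}$ is a stationary point of problem~\eqref{wt3.4}, so the stationarity condition \eqref{eq3.5} holds at $\hat{z}$. It is precisely here that the standing hypothesis enters, namely that $V_{y_k}$ is $C_p$-convex on $\Omega$. Invoking the implication of Theorem~\ref{th3.1}(ii) --- stationarity together with $C_p$-convexity forces weak efficiency --- I would conclude that $\hat{z}$ is a weakly efficient solution of~\eqref{wt3.4}. Since $\hat{z}$ was an arbitrary cluster point, every accumulation point of the sequence is weakly efficient, and by Definition~\ref{de3.3} the corresponding $\hat{x}\in M$ is a spherical weakly efficient solution of~\eqref{wt3.1}.

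The delicate point, and the one I expect to be the genuine obstacle, is the gap between the two readings of the statement. The argument above establishes that each accumulation point is weakly efficient, whereas the word ``converges'' literally demands convergence of the whole sequence to a single weakly efficient point. To upgrade from ``cluster points are weakly efficient'' to genuine convergence I would need extra structure: either uniqueness of the limit point, or a Fej\'er-type monotonicity of $\{z_k\}$ relative to the weakly efficient set, which one might try to extract from the strict descent relation $V_{y_k}(z_{k+1})\prec_{C_p}V_{y_k}(z_k)$ together with the $C_p$-convexity of $V_{y_k}$. Absent such an inequality I would interpret the conclusion in the cluster-point sense, which is exactly what the combination of Theorem~\ref{th5.1} and Theorem~\ref{th3.1} delivers; I expect the intended proof to be the short two-line combination, leaving the strengthening to full sequential convergence as the implicit and more delicate part.
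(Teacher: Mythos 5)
Your proposal matches the paper's argument exactly: the paper offers no separate proof of Theorem~\ref{th5.2}, merely stating that it ``follows from Theorem~\ref{th3.1} and Theorem~\ref{th5.1}'', i.e., precisely the two-line combination you describe of cluster-point stationarity with the convexity-based sufficiency of Theorem~\ref{th3.1}(ii). Your further observation that this only yields weak efficiency of every cluster point, not convergence of the whole sequence to a single weakly efficient solution, identifies a real gap in the paper itself --- the authors supply no Fej\'er-monotonicity or uniqueness argument to justify the word ``converges'', so the result should properly be read in the cluster-point sense you give.
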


\begin{remark}
	According to Definition \ref{de3.3} and Theorem \ref{th5.2}, it follows that if $V_{\mathcal{Y}_k}$ is $C_{p}$-convex on $\Omega $, and $\left\{z_{k}\right\}$ is the sequence generated by the improved conditional gradient algorithm with the Armijo step condition \eqref{eq4.9}. Then, the $\overline{x}\in D$ corresponding to every cluster point $\overline{z}$ of sequence $\left\{z_{k}\right\}$ is called a spherical weakly effective solution (or weakly Pareto solution) of problem \eqref{wt3.1}.
\end{remark}

The improved conditional gradient algorithm can calculate the minimum value of function $V_{\mathcal{Y}_k}$ on $\Omega $, indirectly obtaining the minimum value of function $y_k=F(x_k)$.

\section{Conclusion}\noindent
\setcounter{equation}{0}

In this paper, it is an important research part to define the order relationship on the sphere using the tangent space cones corresponding to points on the local sphere, and to transform the original optimization problem on the sphere into an equivalent vector optimization problem on the tangent space through exponential mapping. According to reference \cite{26}, the step size strategy for improving conditional gradient algorithm can also be studied using nonmonotonic step sizes and adaptive step sizes. It is worth noting that the auxiliary subproblem \ref{wt4.2} in the conditional gradient algorithm is not easy to implement intuitively. Whether it can be expressed as a specific expression remains to be studied. In the future, we can also study scalarization methods for spherical optimization problems.

\end{document}